\documentclass[a4paper,12pt]{amsart}

\def\NZQ{\mathbb}               
\def\NN{{\NZQ N}}

\def\ZZ{{\NZQ Z}}

\def\CC{{\NZQ C}}
\def\F2{{\NZQ F}_2}


%
%

%
%

%
\def\opn#1#2{\def#1{\operatorname{#2}}} 
%
\opn\chara{char} \opn\length{\ell} \opn\pd{pd} \opn\rk{rk}
\opn\projdim{proj\,dim} \opn\injdim{inj\,dim} \opn\rank{rank}
\opn\depth{depth} \opn\codepth{codepth} \opn\grade{grade}
\opn\height{height} \opn\embdim{emb\,dim} \opn\codim{codim}

\opn\Tr{Tr} \opn\bigrank{big\,rank}
\opn\superheight{superheight}\opn\lcm{lcm}
\opn\trdeg{tr\,deg}%
\opn\reg{reg} \opn\lreg{lreg} \opn\skel{skel}
\opn\Gr{Gr}
\opn\ann{ann}
\opn\sign{sign}
\opn\del{del}

%
\opn\div{div} \opn\Div{Div} \opn\cl{cl} \opn\Cl{Cl}
%
%
\opn\Spec{Spec} \opn\Supp{Supp} \opn\supp{supp} \opn\Sing{Sing}
\opn\Ass{Ass}\opn\fdepth{fdepth}
%
%
\opn\Ann{Ann} \opn\Rad{Rad} \opn\Soc{Soc}
%
%
\opn\Sym{Sym} \opn\Ker{Ker} \opn\Coker{Coker} \opn\Im{Im}
\opn\Hom{Hom} \opn\Tor{Tor} \opn\Ext{Ext} \opn\End{End}
\opn\Aut{Aut} \opn\id{id} \opn\ini{in} \opn\tr{tr}

\opn\nat{nat}\opn\it{it}
\opn\pff{proof}
\opn\Pf{proof} \opn\GL{GL} \opn\SL{SL} \opn\mod{mod} \opn\ord{ord}
%
%
\opn\aff{aff} \opn\con{conv} \opn\relint{relint} \opn\st{st}
\opn\lk{lk} \opn\cn{cn} \opn\core{core} \opn\vol{vol}
\opn\link{link} \opn\star{star} \opn\skel{skel} \opn\indeg{indeg}
\opn\Ass{Ass} \opn\Min{Min} \opn\sdepth{sdepth} \opn\depth{depth}
\opn\gr{gr}

%
%

\def\pot#1#2{#1[\kern-0.28ex[#2]\kern-0.28ex]}

%
%
\opn\dirlim{\underrightarrow{\lim}}
\opn\inivlim{\underleftarrow{\lim}}
%
%
%

%
%

\def\Implies{\ifmmode\Longrightarrow \else
     \unskip${}\Longrightarrow{}$\ignorespaces\fi}
\def\implies{\ifmmode\Rightarrow \else
     \unskip${}\Rightarrow{}$\ignorespaces\fi}
\def\iff{\ifmmode\Longleftrightarrow \else
     \unskip${}\Longleftrightarrow{}$\ignorespaces\fi}

\let\:=\colon
\theoremstyle{plain}
\newtheorem{Theorem}{Theorem}[section]
 \newtheorem{Lemma}[Theorem]{Lemma}
 \newtheorem{Corollary}[Theorem]{Corollary}
 \newtheorem{Proposition}[Theorem]{Proposition}
 
 \newtheorem{Conjecture}[Theorem]{Conjecture}
 \newtheorem{Question}[Theorem]{Question}
 \theoremstyle{definition}
 \newtheorem{Definition}[Theorem]{Definition}
 \newtheorem{Remark}[Theorem]{Remark}
 
 \newtheorem{Example}[Theorem]{Example}

%
%
\let\epsilon\varepsilon
\let\kappa=\varkappa
%
%
\textwidth=15cm \textheight=22cm \topmargin=0.5cm
\oddsidemargin=0.5cm \evensidemargin=0.5cm \pagestyle{plain}
%
%
%
\opn\dis{dis}
\def\pnt{{\raise0.5mm\hbox{\large\bf.}}}

\opn\Lex{Lex}



\usepackage{subcaption,tikz}

\usepackage{float}
\usepackage{tikz-cd}
\usepackage{mathtools}
\usepackage{amsfonts}
\usepackage{amssymb}
\newcommand{\HP}{\mathrm{HP}}

\newcommand{\PP}{\mathcal{P}}
\newcommand{\MM}{\mathcal{M}}

\renewcommand{\H}{\mathrm{H}}

\renewcommand{\phi}{\varphi}

\renewcommand{\CC}{\mathcal{C}}
\newcommand{\DD}{\mathcal{D}}
\title{Hilbert Series of simple thin polyominoes}
\author[Giancarlo Rinaldo]{Giancarlo Rinaldo}

\address[Giancarlo Rinaldo]{Department of Mathematics\\
University of Trento\\
via Sommarive, 14\\
38123 Povo (Trento), Italy}
\email{giancarlo.rinaldo@unitn.it}

\author[Francesco Romeo]{Francesco Romeo}

\address[Francesco Romeo]{Department of Mathematics\\
University of Trento\\
via Sommarive, 14\\
38123 Povo (Trento), Italy}
\email{francesco.romeo-3@unitn.it}

\begin{document}

\maketitle
 
 \begin{abstract}
 Let $\PP$ be a simple thin polyomino, namely a polyomino that has no holes and does not contain a square tetromino as a subpolyomino. In this paper, we determine the reduced Hilbert-Poincar\'{e} series $h(t)/(1-t)^d$ of $K[\PP]$ by proving that $h(t)$ is the rook polynomial of $\PP$. As an application, we characterize the Gorenstein simple thin polyominoes.
 \end{abstract}
\section{Introduction}
Polyominoes are two-dimensional objects obtained by joining edge by edge squares of same size, and they are studied from the point of view of combinatorics, e.g. in tiling problems of the plane (see \cite{Go}). Recently in \cite{Qu}, Qureshi introduced a binomial ideal induced by the geometry of a given polyomino $\PP$, called polyomino ideal, and the related algebra $K[\PP]$ (see Section \ref{sec:pre}). From that moment different authors studied algebraic properties related to this ideal (see \cite{HM,QSS,Sh,MRR1}). In particular in \cite{HM,QSS} the authors proved that if $\PP$ is simple, namely the polyomino has no holes, then $K[\PP]$ is a Cohen-Macaulay domain.

In this paper we compare two generating functions associated with polyominoes: the Hilbert series of $K[\PP]$ and the rook polynomial of $\PP$ (see \cite[Chapter 7]{Ri}).
The well-known ``rook problem'' is the problem of enumerating the number of ways of placing $k$ non-attacking rooks on a chessboard. In a similar way, let $\PP$ be a polyomino and let $r_k$ be the number of ways of arranging $k$ non-attacking rooks on the cells of $\PP$. The polynomial 
\[
r_{\PP}(t)=\sum_{k=0}^{r(\PP)} r_k t^k
\] 
is called the {\em rook polynomial} of $\PP$ and $r(\PP)$ is called the \emph{rook number} of $\PP$.

In a recent paper \cite{EHQR}, the authors proved that, for particular convex polyominoes $\PP$, the Castelnuovo-Mumford regularity of $K[\PP]$ is equal to $r(\PP)$. Starting from this result, we consider the Hilbert-Poincar\'e series of simple polyominoes as a nice object to grasp the above equality and other fundamental invariants by using elementary proofs.

We say that a polyomino $\PP$  is \emph{thin} (see \cite{MRR2}) if $\PP$ does not contain the square tetromino (see Figure \ref{fig:square}) as a subpolyomino.
\begin{figure}[H]
\centering
\begin{tikzpicture}
\draw (0,0)--(2,0);
\draw (0,1)--(2,1);
\draw (0,2)--(2,2);

\draw (0,0)--(0,2);
\draw (1,0)--(1,2);
\draw (2,0)--(2,2);
\end{tikzpicture}\caption{The square tetromino}\label{fig:square}
\end{figure}
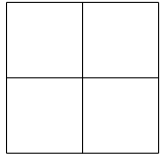
One of the main results of this paper is the following
\begin{Theorem}
Let $\PP$ be a simple thin polyomino such that the reduced Hilbert-Poincar\'e series of $K[\PP]$  is
\[
\HP_{K[\PP]}(t)=\frac{h(t)}{(1-t)^d}.
\]
Then $h(t)$ is the rook polynomial of $\PP$. 
\end{Theorem}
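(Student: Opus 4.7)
The plan is to proceed by induction on the number of cells of $\PP$.  The base cases are immediate: the empty polyomino gives $h(t)=1=r_\PP(t)$, and a single cell gives $K[\PP]\cong K[x_1,x_2,x_3,x_4]/(x_1x_4-x_2x_3)$, for which $\HP_{K[\PP]}(t)=(1+t)/(1-t)^3$ and hence $h(t)=1+t=r_\PP(t)$.  For the inductive step, I would work with the initial ideal: a suitable monomial order makes $\ini(I_\PP)$ squarefree, with Stanley--Reisner complex $\Delta_\PP$ equal to the independence complex of the graph $G_\PP$ on $V(\PP)$ whose edges are the anti-diagonal pairs of inner intervals of $\PP$.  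Since passing to an initial ideal preserves Hilbert series, $h_{K[\PP]}(t)=h_{K[\Delta_\PP]}(t)$, so it suffices to compute the $h$-polynomial of the Stanley--Reisner ring.

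Since $\PP$ is simple and thin, its cell-adjacency graph is a tree, so $\PP$ admits a leaf cell $C$.  Of the two corners of $C$ not shared with any other cell, exactly one lies on the anti-diagonal of $C$; I take $v$ to be this corner.  The short exact sequence $0 \to (x_v) \to K[\Delta_\PP] \to K[\Delta_\PP]/(x_v) \to 0$, combined with $K[\Delta_\PP]/(x_v)=K[\del_{\Delta_\PP}(v)]$ and $\HP((x_v))(t)=(t/(1-t))\,\HP(K[\link_{\Delta_\PP}(v)])(t)$, yields, under the relevant non-cone and purity conditions on $v$, the recursion
\[
h_{K[\Delta_\PP]}(t) \;=\; h_{K[\del(v)]}(t) \;+\; t\cdot h_{K[\link(v)]}(t).
\]
Two geometric claims then need to be verified: (a) the \emph{other} unique corner of $C$ (the unique diagonal corner) is isolated in $G_\PP$, since $C$ has no cell-adjacent neighbor other than its parent and so no inner interval of $\PP$ can have this corner as an anti-diagonal endpoint; consequently $G_\PP-v$ is $G_{\PP\setminus C}$ with one extra isolated vertex, and $h_{K[\del(v)]}(t)=h_{K[\PP\setminus C]}(t)$.  (b) The link $\link_{\Delta_\PP}(v)$, the independence complex of $G_\PP[V(\PP)\setminus N[v]]$, has $h$-polynomial equal to $r_{\PP^*}(t)$, where $\PP^*:=\PP\setminus(\{C\}\cup\operatorname{attack}(C))$ denotes the polyomino obtained from $\PP$ by further removing all cells sharing a row or column with $C$.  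Combining (a), (b), the inductive hypothesis $h_{K[\PP\setminus C]}(t)=r_{\PP\setminus C}(t)$, and the classical rook-polynomial recursion $r_\PP(t)=r_{\PP\setminus C}(t)+t\cdot r_{\PP^*}(t)$ would close the induction.

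The principal obstacle is claim (b).  The polyomino $\PP^*$ may fail to be connected, so $\link_{\Delta_\PP}(v)$ is not simply the Stanley--Reisner complex of $K[\PP^*]$; the graph $G_\PP[V(\PP)\setminus N[v]]$ retains additional anti-diagonal edges coming from inner intervals of $\PP$ that straddle the cells of $\operatorname{attack}(C)$, and these extra edges are precisely what promote the strip-based attack relations in the polyomino ring $K[\PP^*]$ to the lattice-based row/column attack relations counted by $r_{\PP^*}(t)$.  Carrying out this identification---either by a direct combinatorial bijection between facets of $\link(v)$ and non-attacking rook placements on the original lattice, or by a secondary induction on the structure of $\PP^*$ that tracks the extra edges uniformly---is the technical heart of the argument and will occupy the bulk of the detailed verification.
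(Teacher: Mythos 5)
Your framework is essentially a Gr\"obner--deformation reformulation of the paper's colon-ideal argument: the paper's exact sequences identify $R/(I_\PP,x_u,x_v)\cong K[\PP']$ and $R/((I_\PP,x_u):x_v)\cong K[\PP'']\otimes K[y_1,\ldots,y_{r-1}]$, which in the Stanley--Reisner world become exactly your $\del(v)$ (a cone over $\Delta_{\PP'}$) and $\link(v)$ (a cone over $\Delta_{\PP''}$). So the decomposition you propose is the right one. The genuine gap is your claim (b), and you correctly identify it as the heart of the matter---but the obstacle is not merely technical. Your $\link(v)$ is the independence complex of a graph that is not of the form $G_{\mathcal{Q}}$ for any polyomino $\mathcal{Q}$ on the nose (the board $\PP^*=\PP\setminus I$ is disconnected, with extra "attack'' edges coming from the interval $J$ that straddles $I$), so the inductive hypothesis simply does not apply to it, and the induction does not close. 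The missing idea is the paper's \emph{collapsing} construction (Definition~\ref{def:P''}, Proposition~\ref{prop:coll}, Lemma~\ref{lem:coronavirus}): one must show $\PP$ is collapsible in a carefully chosen interval $I$ (Lemma~\ref{lem:exist} produces the right leaf, not an arbitrary one---with an arbitrary leaf $\PP\setminus I$ splinters into several pieces attached at several crossing intervals and the identification is strictly harder), and then verify that the translated polyomino $\PP''$ is again a simple thin polyomino realizing precisely the inherited attack relations on $\PP\setminus I$. That is what makes $\link(v)$ match an honest $\Delta_{\PP''}$ up to cone points; without it, claim (b) is a conjecture, not a lemma.

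Two smaller points. First, be careful about which rook convention is in play: the paper uses \emph{strip-based} (contiguous) attacks, not full lattice row/column attacks---see Example~\ref{ex:canuzzo}, where $C_2$ and $C_4$ lie in the same row of the ambient grid yet are counted as non-attacking. Your phrase "lattice-based row/column attack relations counted by $r_{\PP^*}(t)$'' suggests the wrong convention; with the correct (strip) convention the cross-piece attacks in $\PP^*$ are exactly those mediated by $J$, which is what the collapse encodes. Second, the premise that $\ini(I_\PP)$ is generated by the anti-diagonal corner products needs to be pinned down (it is the anti-diagonal, not the diagonal, order that works here, and this should be cited/verified), and the purity/non-cone-point hypotheses needed to turn the short exact sequence into the clean $h$-polynomial recursion $h_\Delta=h_{\del(v)}+t\,h_{\link(v)}$ should be checked---both are true but are currently asserted rather than argued.
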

In particular it follows that the Castelnuovo-Mumford regularity of $K[\PP]$ is $r(\PP)$ and the multiplicity of $K[\PP]$  is $r_\PP(1)$.

An open question is to give a complete characterization of the Gorensteinnes of the algebra  $K[\PP]$ when $\PP$ is a simple polyomino. Some partial results in this direction are in \cite{Qu,An,EHQR}.
The other main result of this paper is Theorem \ref{thm:Gore}, in which we classify the simple thin polyominoes $\PP$ having a Gorenstein algebra $K[\PP]$, due to the geometric properties of $\PP$. At the end we present a conjecture and an open question.

\section{Preliminaries}\label{sec:pre}
In this section we recall general definitions and notation on polyominoes and algebraic invariants of commutative algebra (see also \cite{HHO,Vi}).

Let $a = (i, j), b = (k, \ell) \in \NN^2$, with $i	\leq k$ and $j\leq\ell$, the set $[a, b]=\{(r,s) \in \NN^2 : i\leq r \leq k \text{ and } j \leq s \leq \ell\}$ is called an \textit{interval} of $\NN^2$. If $i<k$ and $j < \ell$, $[a,b]$ is called a \textit{proper interval}, and the elements $a,b,c,d$ are called corners of $[a,b]$, where $c=(i,\ell)$ and $d=(k,j)$. In particular, $a,b$ are called \textit{diagonal corners} and $c,d$ \textit{anti-diagonal corners} of $[a,b]$. The corner $a$ (resp. $c$) is also called the left lower (resp. upper) corner of $[a,b]$, and $d$ (resp. $b$) is the right lower (resp. upper) corner of $[a,b]$. A proper interval of the form $C = [a, a + (1, 1)]$ is called a \textit{cell}. Its vertices $V(C)$ are $a, a+(1,0), a+(0,1), a+(1,1)$ and its edges $E(C)$ are 
\[
 \{a,a+(1,0)\}, \{a,a+(0,1)\},\{a+(1,0),a+(1,1)\},\{a+(0,1),a+(1,1)\}.
\]
In the following, we denote by $e(C)$ the left lower corner of a cell $C$.

Let $\PP$ be a finite collection of cells of $\NN^2$, and let $C$ and $D$ be two cells of $\PP$. Then $C$ and $D$ are said to be \textit{connected}, if there is a sequence of cells $C = C_1,\ldots, C_m = D$ of $\PP$ such that $C_i\cap C_{i+1}$ is an edge of $C_i$
for $i = 1,\ldots, m - 1$. In addition, if $C_i \neq C_j$ for all $i \neq j$, then $C_1,\dots, C_m$ is called a \textit{path} (connecting $C$ and $D$). A collection of cells $\PP$ is called a \textit{polyomino} if any two cells of $\PP$ are connected. We denote by $V(\PP)=\cup _{C\in \PP} V(C)$ the vertex set of $\PP$. The number of cells of $\PP$ is called the \textit{rank} of $\PP$, and we denote it by $\rk \PP$.
 A proper interval $[a,b]$ is called an \textit{inner interval} of $\PP$ if all cells of $[a,b]$ belong to $\PP$.
We say that a polyomino $\PP$ is \textit{simple} if for any two cells $C$ and $D$ of $\NN^2$ not belonging to $\PP$, there exists a path $C=C_{1},\dots,C_{m}=D$ such that $C_i \notin \PP$ for any $i=1,\dots,m$. %
An interval $[a,b]$ with $a = (i,j)$ and $b = (k, \ell)$ is called a \textit{horizontal edge interval} of $\PP$ if $j =\ell$ and the sets $\{(r, j), (r+1, j)\}$ for
$r = i, \dots, k-1$ are edges of cells of $\PP$. If a horizontal edge interval of $\PP$ is not
strictly contained in any other horizontal edge interval of $\PP$, then we call it \textit{maximal horizontal edge interval}. Similarly, one defines vertical edge intervals and maximal vertical edge intervals of $\PP$. 

Let $\PP$ be a polyomino and define the polynomial ring $R = K[x_v \ | \ v \in V(\PP)]$ over a field $K$. The binomial $x_a x_b - x_c x_d\in R$ is called an \textit{inner 2-minor} of $\PP$ if $[a,b]$ is an inner interval of $\PP$, where $c,d$ are the anti-diagonal corners of $[a,b]$. We denote by $\MM$ the set of all inner 2-minors of $\PP$. The ideal $I_\PP\subset R$ generated by $\MM$ is called the \textit{polyomino ideal} of $\PP$. We also set $K [\PP] = R/I_\PP$ . 

By combining \cite[Theorem 2.1]{HM} with \cite[Corollary 3.2]{HQS}, one obtains the following
\begin{Lemma}\label{lem:dim}
Let $\PP$ be a simple polyomino. Then $K[\PP]$ is a normal Cohen-Macaulay domain of Krull dimension $|V(\PP)|-\rk \PP$.
\end{Lemma}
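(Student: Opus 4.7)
The plan is to derive the lemma directly by combining the two cited results, so my proof would be a citation paired with a brief explanation of why their combination yields precisely the asserted statement. First I would invoke \cite[Theorem 2.1]{HM}, which asserts that for a simple polyomino $\PP$ the algebra $K[\PP]$ is a normal Cohen-Macaulay domain. The conceptual input behind that theorem is that, when $\PP$ has no holes, the inner 2-minor ideal $I_\PP$ coincides with the toric ideal $\ker\phi$ of the lattice configuration obtained by sending each vertex $v=(i,j)\in V(\PP)$ to the monomial $s_i t_j$ (equivalently, to an edge of the bipartite graph whose vertex classes are the maximal horizontal and maximal vertical edge intervals of $\PP$). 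Primality is then automatic, normality follows from a combinatorial check that the associated affine semigroup is normal, and the Cohen-Macaulay property is obtained from Hochster's theorem for normal semigroup rings.

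For the Krull dimension I would invoke \cite[Corollary 3.2]{HQS}. Using the toric realization above, $\dim K[\PP]$ equals the rank of the lattice spanned by the exponent vectors of the monomials $s_i t_j$, $(i,j)\in V(\PP)$, that is, the dimension of the edge ring of the associated bipartite graph. A direct combinatorial argument, using simple-connectedness of $\PP$ together with the Euler relation $|V(\PP)|-|E(\PP)|+\rk\PP=1$ applied to the planar CW-structure of $\PP$, rearranges this rank into the form $|V(\PP)|-\rk\PP$. Equivalently, one can count independent generators of the toric ideal: the cell inner 2-minors provide $\rk\PP$ relations which one shows to be linearly independent modulo lower-order terms, and absence of holes guarantees no further relations are needed to cut out $\ker\phi$.

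The main obstacle in a self-contained treatment would be establishing the identification $I_\PP=\ker\phi$; this requires a detailed analysis of primitive binomials on $\PP$ and crucially uses the hypothesis that $\PP$ has no holes, since in the presence of holes additional binomials appear in $\ker\phi$ that are not generated by inner 2-minors. Once this identification is granted, the normality, Cohen-Macaulay, and domain properties follow from the standard toric dictionary, and the dimension formula is a short Euler-characteristic computation. Because both \cite[Theorem 2.1]{HM} and \cite[Corollary 3.2]{HQS} already carry out this analysis, the present lemma is obtained by concatenating their statements.
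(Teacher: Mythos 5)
Your proposal takes exactly the route the paper itself does: the lemma is obtained by concatenating \cite[Theorem 2.1]{HM} (normal Cohen--Macaulay domain) with \cite[Corollary 3.2]{HQS} (Krull dimension $|V(\PP)|-\rk\PP$), and your surrounding toric commentary is a reasonable gloss on why those results hold. One small caution on the gloss only: for a general simple polyomino the map $v=(i,j)\mapsto s_it_j$ on raw grid coordinates and the map sending $v$ to the pair of maximal horizontal/vertical edge intervals through $v$ are not literally the same parametrization (a single grid row may meet $\PP$ in several disjoint horizontal intervals), and it is the latter, interval-indexed, bipartite-graph realization that underlies \cite{HM}.
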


Let $R$ be a standard graded ring and $I$ be a homogeneous ideal. The \textit{Hilbert function} $\H_{R/I} : \mathbb{N} \rightarrow \mathbb{N}$ is defined by 
\[
\H_{R/I} (k) := \dim_K (R/I)_k
\]
where $(R/I)_k$ is the $k$-degree component of the gradation of $R/I$, while the \emph{Hilbert-Poincar\'e series} of $R/I$ is
\[
\HP_{R/I} (t) := \sum_{k \in \NN} \H_{R/I}(k) t^k. 
\]
By the Hilbert-Serre theorem, the Hilbert-Poincar\'e series of $R/I$ is a rational function. In particular, by reducing this rational function we get
\[
\HP_{R/I}(t) = \frac{h(t)}{(1-t)^d}.
\]
for some $h(t) \in \mathbb{Z}[t]$, where $d$ is the Krull dimension of $R/I$. The degree of $\HP_{R/I}(t)$ as a rational function, namely $\deg h(t)-d$, is called \emph{a-invariant} of $R/I$, denoted by $a(R/I)$. It is known that whenever $R/I$ is Cohen-Macaulay we have $a(R/I)=\reg R/I-\depth R/I$, that is $\reg R/I=\deg h(t)$.

We recall the following result about Hilbert series
\begin{Proposition}\label{prop:hilbexact}
Let $I$ be a homogeneous ideal of a graded ring $R$, let $f\in R$ be a homogeneous element of degree $d$ and consider the following exact sequence.
\begin{center}
\begin{tikzcd}
    0\arrow{r} & R/(I:f)\arrow{r}{\cdot f} &R/I  \arrow{r} & R/(I,f) \arrow{r} & 0 \\
\end{tikzcd}
\end{center}
Then 
\begin{enumerate}
\item $\HP_{R/I}(t)=\HP_{R/(I,f)}(t)+t^d \HP_{R/(I:f)}(t)$
\item If $f$ is a regular element then
\[
\HP_{R/I}(t)=\frac{1}{1-t^d}\HP_{R/(I,f)}(t).
\]
\end{enumerate}
\end{Proposition}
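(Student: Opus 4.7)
The plan is to first verify that the displayed short exact sequence is well defined in the category of graded modules (with an appropriate internal degree shift), then apply the additivity of the Hilbert function on graded short exact sequences. Part (2) will follow from part (1) by a direct algebraic manipulation once one recognizes that regularity of $f$ makes $(I:f)=I$.

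For part (1), I would first write the map $\cdot f\colon R/(I:f)\to R/I$ explicitly as $\overline{g}\mapsto \overline{fg}$. Since $f$ is homogeneous of degree $d$, this map raises internal degree by $d$, so it should be viewed as a degree $0$ map $(R/(I:f))(-d)\to R/I$ of graded $R$-modules. Its kernel vanishes by the very definition of the colon ideal, since $fg\in I$ means $g\in(I:f)$. Its image is the graded submodule $(I+(f))/I$ of $R/I$, whose cokernel is precisely $R/(I,f)$. Hence the sequence is exact in the graded category, with the understanding that the leftmost module is shifted by $-d$.

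Applying the Hilbert function degree by degree to this sequence yields
\[
\H_{R/I}(k)=\H_{R/(I,f)}(k)+\H_{R/(I:f)}(k-d)
\]
for every $k\in\NN$, where the shift of $d$ in the last argument comes directly from the internal degree shift in the map $\cdot f$. Multiplying by $t^k$ and summing over $k\geq 0$ gives
\[
\HP_{R/I}(t)=\HP_{R/(I,f)}(t)+t^d\,\HP_{R/(I:f)}(t),
\]
which is the first identity.

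For part (2), regularity of $f$ on $R/I$ is the statement that multiplication by $f$ is injective on $R/I$, equivalently $(I:f)=I$, so $\HP_{R/(I:f)}(t)=\HP_{R/I}(t)$. Substituting this into the identity of part (1) and solving for $\HP_{R/I}(t)$ produces
\[
(1-t^d)\,\HP_{R/I}(t)=\HP_{R/(I,f)}(t),
\]
which rearranges to the desired formula. There is no real obstacle here; the only point that requires care is tracking the degree shift so that the factor $t^d$ appears in the correct place. The rest is just the standard fact that the Hilbert–Poincar\'e series is additive on short exact sequences of finitely generated graded modules.
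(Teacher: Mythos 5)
The paper states this proposition without proof (it is presented as a recalled standard fact), so there is no in-text argument to compare against. Your proof is correct and is exactly the standard argument: check the short exact sequence of graded modules with the internal shift $(R/(I:f))(-d)$, use additivity of Hilbert functions degree by degree, and for part~(2) observe that $f$ regular on $R/I$ forces $(I:f)=I$, after which the identity from part~(1) rearranges to the stated formula.
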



We also rephrase the result of Stanley \cite[Theorem 4.4]{St} that is  fundamental for our aim in Section \ref{sec:Gorenstein}.

\begin{Theorem}\label{thm:St}
Let $R=K[x_1,\ldots, x_n]$ be a standard graded polynomial ring, $I$ be a homogeneous ideal of $R$ such that $R/I$ is a Cohen-Macaulay domain, and let 
\[
\HP_{R/I}(t)=\frac{\sum\limits_{i=0}^s h_i t^i}{(1-t)^d}
\]
be the reduced Hilbert series of $R/I$.
Then $R/I$ is Gorenstein if and only if for any $i=0,\ldots,s$ we have $h_i=h_{s-i}$.
\end{Theorem}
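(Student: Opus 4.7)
The plan is to work through the canonical module $\omega_A$ of $A := R/I$ and exploit the standard characterization that, for a graded Cohen-Macaulay $K$-algebra, $A$ is Gorenstein if and only if $\omega_A$ is free of rank one, i.e.\ $\omega_A \cong A(a)$ for some $a \in \ZZ$. The key input is the duality formula
\[
\HP_{\omega_A}(t) = (-1)^d\, \HP_A(t^{-1}),
\]
which one derives by computing $\omega_A = \Ext^{n-d}_R(A, R(-n))$ from a minimal graded free resolution of $A$ over the polynomial ring $R$.

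Substituting $\HP_A(t) = h(t)/(1-t)^d$ and using $(1-t^{-1})^d = (-1)^d t^{-d}(1-t)^d$, the duality formula rearranges as
\[
\HP_{\omega_A}(t) = \frac{t^{d-s}\sum_{i=0}^{s} h_{s-i}\, t^{i}}{(1-t)^d}.
\]
For the forward implication, $\omega_A \cong A(a)$ gives $\HP_{\omega_A}(t) = t^{-a}\HP_A(t)$; matching this against the display forces $a = s-d$ and $h_i = h_{s-i}$ for all $i$. For the converse, assume $h_i = h_{s-i}$. The numerator then collapses to $t^{d-s}h(t)$, so $\HP_{\omega_A}(t) = \HP_{A(s-d)}(t)$; in particular $\dim_K(\omega_A)_{d-s} = h_s = h_0 = 1$. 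Pick a nonzero $g \in (\omega_A)_{d-s}$ and define the graded $A$-linear map $\phi\colon A(s-d) \to \omega_A$ by $1 \mapsto g$. Here the hypothesis that $A$ is a \emph{domain} enters decisively: $\Ass(\omega_A) = \Ass(A) = \{(0)\}$ over the CM domain $A$, hence $\omega_A$ is torsion-free and $\Ann_A(g) = 0$, giving injectivity of $\phi$. Since source and target share the same Hilbert series, $\phi$ is an isomorphism, so $\omega_A \cong A(s-d)$ and $A$ is Gorenstein.

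The main obstacle is the converse implication: symmetry of the $h$-vector is only necessary, not sufficient, for Gorensteiness in general, and the domain hypothesis is exactly what upgrades ``$Ag$ and $\omega_A$ have equal Hilbert series'' to ``$Ag = \omega_A$'' via the vanishing of $\Ann_A(g)$ in the torsion-free module $\omega_A$. A secondary technical point is the justification of the duality formula $\HP_{\omega_A}(t) = (-1)^d \HP_A(t^{-1})$, which, although classical, is where all the cohomological content of the argument is packed.
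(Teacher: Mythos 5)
The paper does not prove this statement itself; it cites it directly from Stanley \cite[Theorem 4.4]{St}. Your proposal is a correct, self-contained reconstruction of the standard argument via canonical modules: the duality formula $\HP_{\omega_A}(t)=(-1)^d\HP_A(t^{-1})$, the identification $\omega_A\cong A(a)$ forcing the $h$-vector symmetry, and, for the converse, torsion-freeness of $\omega_A$ over the CM \emph{domain} $A$ giving injectivity of $A(s-d)\to\omega_A$, whence an isomorphism by Hilbert-series comparison. This is essentially the proof in the cited source, so there is nothing to add.
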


\section{Hilbert series of simple thin polyominoes}\label{sec:hilb}
In this section we compute the Hilbert series of simple thin polyominoes in relation with their rook polynomial. We start with the following

\begin{Definition}
Let $C$ and $D$ be two cells of $\mathbb{N}^2$ such that $e(C) \leq e(D)$. We call the set 
\[
[C,D]=\{F \in \NN^2: e(F) \in [e(C),e(D)] \}
\]
\emph{interval of cells}. If $e(C)$ and $e(D)$ lie either on the same vertical edge interval or on the same horizontal edge interval, we call $[C,D]$ a \emph{cell interval}. 	We call $[C,D]$ \emph{inner interval of cells} of $\PP$ if any cell in $[C,D]$ is a cell of $\PP$.
\end{Definition}

\begin{Lemma}\label{lem:maximal}
Let $\PP$ be a simple thin polyomino. Then any maximal inner interval $I$ of cells of $\PP$ is a cell interval, and for any maximal inner  interval $J\neq I$ such that $V(I)\cap V(J)\neq \varnothing$,  $I$ and $J$ have either one cell, one edge or one vertex in common.
\end{Lemma}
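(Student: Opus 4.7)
The plan is to prove the two assertions in sequence. For the first, let $[C,D]$ be a maximal inner interval of cells with $e(C)=(i,j)$ and $e(D)=(k,\ell)$, where $i\le k$ and $j\le\ell$. If $i=k$ or $j=\ell$, the cells of $[C,D]$ form a single column or row, and their aligned left or lower edges yield a vertical or horizontal edge interval of $\PP$ passing through $e(C)$ and $e(D)$, so $[C,D]$ is a cell interval. Otherwise $i<k$ and $j<\ell$, and the four cells with lower-left corners $(i,j),(i+1,j),(i,j+1),(i+1,j+1)$ all lie in $[C,D]\subseteq\PP$, forming a square tetromino, which contradicts thinness.

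For the second assertion, each of $I$ and $J$ is now a cell interval (horizontal or vertical), so I split into two cases up to symmetry. If both are horizontal, say $I$ on row $y$ with columns $[a,b]$ and $J$ on row $y'$ with columns $[a',b']$, then $V(I)\cap V(J)\neq\varnothing$ forces $|y-y'|\le 1$. The case $y=y'$ is ruled out because two distinct maximal horizontal intervals on the same row must be separated by a gap (otherwise their union is a strictly longer horizontal inner interval, contradicting maximality of either) and hence share no vertex. So $|y-y'|=1$, and the shared vertices sit in the common row at columns $[a,b+1]\cap[a',b'+1]$, a contiguous integer interval; if it had at least three elements, two consecutive columns would lie in $[a,b]\cap[a',b']$, yielding a $2\times 2$ block and contradicting thinness. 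Thus the intersection has size $1$ or $2$, so $I$ and $J$ share one vertex or one edge.

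In the perpendicular case, with $I$ horizontal on row $y$, columns $[a,b]$, and $J$ vertical on column $x_0$, rows $[c,d]$, the shared vertex set is the product $(\{x_0,x_0+1\}\cap[a,b+1])\times(\{y,y+1\}\cap[c,d+1])$, whose factors have size $1$ or $2$. If both have size $2$, then $x_0\in[a,b]$ and $y\in[c,d]$, and $(x_0,y)$ is the unique common cell; if both have size $1$, the intervals meet in a single vertex. The mixed configurations are excluded by maximality: for example, if the $x$-factor has size $2$ and $y=c-1$, then $(x_0,c-1)\in I\subseteq\PP$ would extend $J$ downward along column $x_0$, contradicting its maximality; the remaining mixed cases are symmetric. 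The main obstacle is precisely this last step: thinness alone does not prevent a partial overlap between perpendicular maximal intervals, so one has to invoke maximality carefully to rule out these mixed configurations.
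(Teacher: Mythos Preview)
Your proof is correct. For the second assertion the paper takes a different organizational route: rather than splitting by the relative orientation of $I$ and $J$, it argues by contradiction on the \emph{type} of overlap, showing that ``two or more common edges not belonging to the same cell'' forces a square tetromino, while ``two or more common cells'' would make $I\cup J$ a strictly larger inner interval, contradicting maximality. Your orientation-based case analysis is more explicit and in the perpendicular case actually proves a bit more than the lemma requires, namely that a pure edge-overlap is impossible by maximality of $I$ or $J$. The paper's version is terser; yours makes the coordinate bookkeeping transparent and does not leave it to the reader to verify that the two contradiction cases exhaust all possible ``large'' overlaps.
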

\begin{proof}
Since $\PP$ does not contain a square tetromino, then also any maximal inner interval of $\PP$ does not contain a square tetromino, namely it is a cell interval.

Let $I,J$ be two maximal inner intervals of $\PP$ such that $V(I)\cap V(J)\neq \varnothing$. By contradiction, we consider the following two cases: $I$ and $J$ have two or more edges in common, not belonging to the same cell, and $I$ and $J$ have two or more cells in common.
In the first case, without loss of generality $V(I)\cap V(J)=[(i,j),(k,j)]$ with $k>i+1$. Therefore, the cells whose left lower corners are $(i,j-1),(i+1,j-1),(i,j),(i+1,j)$ form a square tetromino, that is a contradiction. \\ 
In the second case, $I\cup J$ is a maximal inner interval  strictly containing  $I$ and $J$, and this is a contradiction.
The assertion follows.
\end{proof}

From now on, we will  briefly call inner intervals the inner intervals of cells of a polyomino $\PP$.
In the following we define the simple polyominoes $\PP'$ and $\PP''$ obtainable from a simple (thin) polyomino $\PP$. The latter are fundamental for the computation of the Hilbert series.

\begin{Definition}[Polyomino $\PP'$]\label{def:P'}
Let $\PP$ be a simple polyomino. We say that a cell $C$ of $\PP$ is a \emph{leaf} if there exists an edge $\{u,v\}$ of $C$  such that $\{u,v\}\cap V(\PP \setminus \{C\})=\varnothing$. We call the vertices $u$ and $v$ \emph{leaf corners} of $C$. We define the polyomino $\PP'$ as the polyomino $\PP\setminus \{C\}$.
\end{Definition}

\begin{Definition}[Polyomino $\PP''$]\label{def:P''}
Let $\PP$ be a simple thin polyomino and let $I$ be a maximal inner interval of $\PP$. We say that $\PP$ is \emph{collapsible} in $I$ if there exists one and only one maximal inner interval $J$ of $\PP$ intersecting $I$ in a cell, and $\PP=\PP_1 \sqcup I \sqcup \PP_2$ where $\PP_1$ and $\PP_2$ are two polyominoes such that $\PP_2$ is either empty or a cell interval. When $\PP_2$ is empty, $I$ is called a \emph{tail}. When $\PP_2$ is a cell interval, $I$ is called an \emph{endcut}. We define the polyomino $\PP''$ as follows. Let $D$ be the cell such that $I\cap J= \{D\}$, and let $\{a,b,a',b'\}$ be the corners of $D$ where $a,b \in V(\PP_1)$ and $a',b' \in V(\PP_2)$ . We define $\PP''$ as the polyomino obtained from $\PP\setminus I$ by the identification of the vertices $a$ and $b$ of $\PP_1$ with the vertices $a'$ and $b'$ of $\PP_2$, respectively, due to the translation of the cell interval $\PP_2$ (see Figure \ref{fig:oper}). 

\end{Definition}
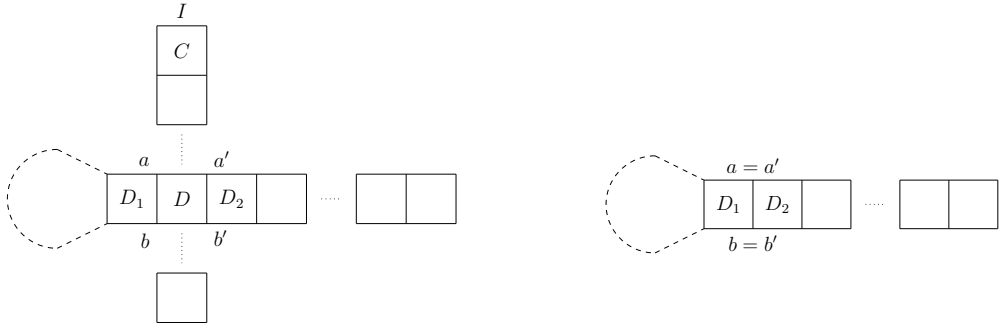
\begin{figure}[H]
\centering
\begin{subfigure}[t]{0.5\textwidth}
\centering
\resizebox{0.8\textwidth}{!}{
\begin{tikzpicture}
\draw (2,2)--(2,1);
\draw (0,1)--(0,2);
\draw (1,0)--(1,-1);
\draw (2,0)--(2,-1);
\draw (1,2)--(1,1);
\draw (3,1)--(3,2);
\draw (4,1)--(4,2);
\draw (5,1)--(5,2);
\draw (6,1)--(6,2);
\draw (7,1)--(7,2);
\draw (1,3)--(1,4);
\draw (2,3)--(2,4);

\draw (1,-1)--(2,-1);
\draw (1,0)--(2,0);
\draw (1,4)--(2,4);
\draw (0,1)--(4,1);
\draw (0,2)--(4,2);
\draw (1,3)--(2,3);
\draw (5,1)--(7,1);
\draw (5,2)--(7,2);

\draw[dashed] (-1,2.5)--(0,2);
\draw[dashed] (-1,0.5)--(0,1);
\draw[dashed] (-1,2.5) arc(90:270:1);

\draw[dotted]   (1.5,2.8)--(1.5,2.2);
\draw[dotted]   (1.5,0.8)--(1.5,0.2);
\draw[dotted]   (4.3,1.5)--(4.7,1.5);

\node at (1.5,5.3) {$I$};
\node at (1,0.65) [anchor=east]{$b$};
\node at (1,2.25) [anchor=east]{$a$};
\node at (2,0.7) [anchor=west]{$b'$};
\node at (2,2.3) [anchor=west]{$a'$};
\node at (1.5,4.5){$C$};
\draw (1,5)--(1,4);
\draw (2,5)--(2,4); 
\draw (1,5)--(2,5); 
\node at (1.5,1.5){$D$};
\node at (0.5,1.5){$D_1$};
\node at (2.5,1.5){$D_2$};

\end{tikzpicture}}\caption{A simple thin polyomino $\PP$ which is collapsible in the endcut $I$}
\end{subfigure}%
\begin{subfigure}[t]{0.5\textwidth}
\centering
\resizebox{0.7\textwidth}{!}{
\begin{tikzpicture}
\draw (0,0)--(3,0);
\draw (4,0)--(6,0);
\draw (0,1)--(3,1);
\draw (4,1)--(6,1);

\draw[dashed] (-1,1.5)--(0,1);
\draw[dashed] (-1,-0.5)--(0,0);
\draw[dashed] (-1,1.5) arc(90:270:1);

\draw (0,0)--(0,1);
\draw (1,0)--(1,1);
\draw (2,0)--(2,1);
\draw (3,0)--(3,1);
\draw (4,0)--(4,1);
\draw (5,0)--(5,1);
\draw (6,0)--(6,1);

\draw[dotted]   (3.3,0.5)--(3.7,0.5);

\node at (1,0) [anchor=north]{$b=b'$};
\node at (1,1) [anchor=south]{$a=a'$};
\node at (0.5,0.5){$D_1$};
\node at (1.5,0.5){$D_2$};
\node at (2,-1.8) [anchor=west]{};
\end{tikzpicture}}\caption{The polyomino $\PP''$ after the collapsing of $\PP$ on  $I$}
\end{subfigure}
\caption{The collapsing operation on a simple thin polyomino $\PP$}\label{fig:oper}
\end{figure}

\begin{Remark} \label{rem:rook}
Let $\PP$ be a simple thin polyomino collapsible in $I$ with leaf $C$. We observe that $r(\PP')\in \{r(\PP),r(\PP)-1\}$ and $r(\PP'')=r(\PP)-1$.
For example, if $\PP$ is the polyomino in Figure \ref{fig:canuzzo} and we consider the leaf $ C_1$, then  $r(\PP ')$ is equal to $r(\PP)-1$. On the other hand, if $\PP$ is the polyomino in Figure \ref{fig:S} and we consider the leaf containing $u$ and $v$, then  $r(\PP ')$ is equal to $r(\PP)$. In both cases, we have $r(\PP'')=r(\PP)-1$. In general, if $C$ belongs to any set of $r(\PP)$ non-attacking rooks, then any set of non-attacking rooks of maximal cardinality in $\PP'$  has $r(\PP)-1$ elements. Otherwise, there exists some set of non-attacking rooks of maximal cardinality in $\PP'$ having $r(\PP)$ elements. Moreover, any set of $r(\PP)$ non-attacking rooks has an element on $I$, that is $r(\PP'')=r(\PP)-1$.
\end{Remark}

We now want to prove that any simple thin polyomino is collapsible in some inner interval $I$. For this aim, we first prove the following

\begin{Lemma}\label{lem:exist}
Let $\PP$ be a simple thin polyomino  that is not a cell interval. Then there exists a maximal inner interval $I$ of $\PP$ for which there exists one and only one maximal inner interval $J$ of $\PP$ intersecting $I$ in a cell.
\end{Lemma}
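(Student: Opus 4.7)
The plan is to associate to $\PP$ an auxiliary graph $G$ whose vertices are the maximal inner intervals of $\PP$, with two such intervals joined by an edge exactly when they share a cell, and to obtain the desired $I$ as a leaf of $G$. By Lemma~\ref{lem:maximal}, any two distinct maximal inner intervals sharing a cell must be in perpendicular directions (one horizontal, one vertical); otherwise both would lie on a common edge interval and one would contain the other, contradicting maximality. Hence each edge of $G$ encodes a unique cell of $\PP$ at which a horizontal and a vertical maximal interval meet, and in particular two adjacent intervals in $G$ share exactly one cell.

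I would first verify that $G$ is connected. For any two maximal inner intervals $I$ and $J$, pick cells $C \in I$ and $D \in J$ and join them by an edge-adjacency path of cells in $\PP$; each consecutive pair of cells on this path lies in a common maximal inner interval (horizontal or vertical according to their relative position), and the resulting chain of such intervals (with consecutive ones coinciding or sharing the transition cell) gives a walk in $G$ from $I$ to $J$.

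The heart of the argument is to show $G$ is acyclic. Assume for contradiction that $G$ contains a cycle $I_1, I_2, \ldots, I_n, I_1$; since directions alternate, $n$ is even and $n \geq 4$. The shared corner cells $C_{i,i+1}\in I_i \cap I_{i+1}$, linked by the portions of the $I_i$'s between them, trace out a simple closed rectilinear curve whose boundary lies in $\PP$ and which, by the Jordan curve theorem, bounds a non-empty rectilinear interior region $R$. If some cell of $R$ is not in $\PP$, then $\PP$ has a hole, contradicting simplicity. Otherwise every cell of $R$ belongs to $\PP$, and then the enclosed region together with its boundary is a filled rectilinear subpolyomino; a direct geometric analysis---immediate for $n=4$ since the bounding rectangle of the four corner cells is already at least $2\times 2$, and for larger $n$ by decomposing $R$ into rectangular pieces and using maximality of the $I_i$'s to force missing cells along any row or column of length one---exhibits either a $2\times 2$ block (violating thinness) or an unfilled interior cell (violating simplicity).

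With $G$ shown to be a tree, and since $\PP$ is not a cell interval so that $G$ has at least two vertices, $G$ has at least two leaves. Any leaf $I$ of $G$ is a maximal inner interval of $\PP$ sharing a cell with exactly one other maximal inner interval of $\PP$, which is exactly the assertion of the lemma. The main obstacle will be the acyclicity step, specifically producing a uniform argument covering cycles of all lengths; the key geometric fact is that the rectilinear curve traced by a cycle in $G$ must enclose either a cell outside $\PP$ (violating simplicity) or a filled region thick enough to contain a square tetromino (violating thinness), and the delicate point is extracting one of these two obstructions without case splitting on $n$.
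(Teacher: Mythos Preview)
Your overall strategy---build the graph $G$ on maximal inner intervals and show it is a tree---is exactly the structure underlying the paper's proof as well. The paper, however, does not argue acyclicity of $G$ via a Jordan-curve analysis of the interval cycle. Instead it first records (as a one-line observation) that in a simple thin polyomino any two cells are joined by a \emph{unique} cell path, and then notes that a vertex of $G$ with degree $\ge 2$ everywhere would allow one to string together a sequence $I,I_1,I_2,\ldots$ of maximal intervals that eventually closes up, producing two distinct cell paths between a pair of cells---a contradiction. So the paper pushes the topological content into the cell-adjacency graph rather than the interval graph, which makes the contradiction immediate.

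Your proposal has a genuine gap precisely where you flag it: the acyclicity step. Two points need attention. First, the closed curve traced by the corner cells $C_{i,i+1}$ along the $I_i$ is not automatically simple; you must pass to a shortest cycle in $G$ (then two non-consecutive $I_i,I_j$ cannot share a cell, or the cycle would shorten). Second, and more seriously, the assertion that a filled interior forces a square tetromino ``by decomposing $R$ into rectangular pieces and using maximality of the $I_i$'s'' is not an argument; maximality of the $I_i$ gives no control on the thickness of $R$. A clean fix, which also recovers the paper's observation, is to work instead with a cycle $\gamma$ in the \emph{cell}-adjacency graph (any cycle in $G$ produces one by concatenating the cell paths along the $I_i$): take the topmost, then leftmost, cell $B=(i,j)$ on $\gamma$; its two $\gamma$-neighbours are forced to be $(i+1,j)$ and $(i,j-1)$, and the cell $(i+1,j-1)$ lies in the interior of $\gamma$, hence in $\PP$ by simplicity. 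These four cells form a square tetromino, contradicting thinness. This single paragraph replaces your entire case analysis and works uniformly in the cycle length.
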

\begin{proof}
Since $\PP$ is simple and thin, we observe that for any two cells $C$ and $D$ of $\PP$ there is a unique path of cells connecting $C$ and $D$.\\
By contradiction, assume that for any maximal inner interval of $\PP$ there are at least two maximal inner intervals intersecting it in one cell. We show that there exist two different paths connecting two given cells.  For this aim, let $I$ be a maximal inner interval of $\PP$. There exist $I_1$ and $J$ such that $I_1\cap I$ and $I_1\cap J$ are cells of $\PP$. Furthermore, there exists $I_2 \neq I$ intersecting $I_1$ in one cell. By using the same argument, we find a sequence of inner intervals $I_1,I_2,\ldots$ of $\PP$ such that $I_j $ and $ I_{j+1}$ have a cell in common. Since the number of inner intervals of $\PP$ is finite, then there exists $k$ such that $I_k=J$, and hence there are two paths connecting a cell $C$ of $I\setminus I\cap J$ with a cell $D$ of $J\setminus I\cap J$, one passing through $I_1,\ldots, I_{k-1} $ and one passing through the cell $I\cap J$. This is a contradiction and the assertion follows.
\end{proof}

\begin{Proposition}\label{prop:coll}
Let $\PP$ be a simple thin polyomino that is not a cell interval. Then $\PP$ is collapsible in some maximal inner interval $I$.
\end{Proposition}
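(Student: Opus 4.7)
The plan is to combine Lemma~\ref{lem:exist} with an extremality argument to produce a maximal inner interval in which $\PP$ collapses.

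First I would apply Lemma~\ref{lem:exist} to obtain a maximal inner interval $I$ whose only partner (i.e., maximal inner interval sharing a cell with $I$) is some $J$, with $I\cap J=\{D\}$. A key structural step is to show that every cell of $I\setminus\{D\}$ has no neighbor in $\PP$ outside of $I$: if it did, that neighbor would belong to a second maximal inner interval perpendicular to $I$ meeting $I$ in a cell, contradicting the uniqueness given by Lemma~\ref{lem:exist}. Using the fact that in a simple thin polyomino any two cells are joined by a unique path of cells (as noted in the proof of Lemma~\ref{lem:exist}), this forces $\PP\setminus I$ to consist of at most two connected components, determined by the two sides of $D$ inside $J$.

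From here a clean dichotomy emerges. If $D$ is at an end of $J$, then $\PP\setminus I$ is connected and the decomposition $\PP=(\PP\setminus I)\sqcup I\sqcup\varnothing$ makes $I$ a tail. If $D$ lies in the interior of $J$, then $\PP\setminus I=\PP_L\sqcup\PP_R$ with both components non-empty. I would then check that $\PP_L$ is a cell interval if and only if no maximal inner interval of $\PP$ other than $J$ intersects $J$ in a cell on the $\PP_L$-side of $D$, and symmetrically for $\PP_R$. Hence the interval $I$ is already collapsible whenever its cell $D$ is extreme (leftmost or rightmost if $J$ is horizontal, bottommost or topmost if vertical) among the cells of $J$ lying in some other maximal inner interval.

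It remains to exhibit such an $I$. If $\PP$ has only two maximal inner intervals, either works, since the unique shared cell is trivially extreme. Otherwise, I would construct $I$ by a greedy walk on the maximal inner intervals: starting at any interval sharing cells with at least two others, I iteratively move from the current interval $N$ to a partner $M$ chosen so that the cell $N\cap M$ sits at an extreme position among $N$'s shared cells and is different from the cell where $N$ meets the previously visited interval. Since every interval has two distinct extreme shared cells, at least one of them always differs from the arrival direction, so the walk can always continue; and since no interval is visited twice and there are finitely many maximal inner intervals, the walk terminates at an interval $I$ sharing cells with only one other partner, whose intersection cell with that partner is, by construction, extreme.

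The main obstacle I foresee is ensuring that the extreme is taken relative to $N$'s full list of shared cells rather than the list with the arrival cell excluded, so that the terminal leaf's shared cell is genuinely extreme in its unique partner. The observation that the leftmost and rightmost (or bottommost and topmost) extreme positions are always distinct cells is exactly what allows the greedy choice to succeed at every step and delivers, at the end of the walk, an interval $I$ satisfying the collapsibility conditions proved in the second and third paragraphs.
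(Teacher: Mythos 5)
Your proof is correct, and it takes a genuinely different route from the paper's. The paper argues by contradiction and a rank-decreasing iteration: assuming $\PP$ has neither a tail nor an endcut, it applies Lemma~\ref{lem:exist} to produce an interval $I_1$ with a unique partner, splits off the strictly smaller piece $\PP_2$, reapplies Lemma~\ref{lem:exist} to $\PP_2$, and so on; since ranks strictly decrease, the process must eventually produce a tail, a contradiction. Your argument is constructive: you first reformulate collapsibility as a local extremality condition (the cell $D=I\cap J$ must sit at an end of $J$'s list of branch cells), and then directly produce such an $I$ via a non-backtracking greedy walk on the maximal inner intervals, always exiting the current interval through one of its two extreme branch cells. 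The one point you should make explicit is why the walk never revisits an interval: this amounts to the graph whose vertices are the maximal inner intervals and whose edges are the shared cells being a tree, which follows from the uniqueness of paths of cells noted at the start of the proof of Lemma~\ref{lem:exist}; without that observation, termination at a degree-one leaf is not automatic. Granting it, the leaf $I$ you reach has a unique partner $J$, and the entry cell $D=I\cap J$ was chosen extreme among $J$'s branch cells at the penultimate step, so the collapsibility criterion from your structural analysis applies. A modest advantage of your route is that it stays inside $\PP$ throughout and sidesteps the reapplication of Lemma~\ref{lem:exist} to a sub-polyomino, where the maximal inner intervals near the cut change; the paper's iteration is shorter on the page but passes over that subtlety in silence.
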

\begin{proof}
If $\PP$ has a tail, then the assertion follows. Therefore, assume that $\PP$ does not contain tails. \\
By contradiction, assume that $\PP$ has no endcuts. From Lemma \ref{lem:exist}, there exists a maximal inner interval $I_1$ of $\PP$ for which there exists one and only one inner interval $J_1$ of $\PP$ intersecting $I_1$ in one cell. Let $\PP=\PP_1 \sqcup I_1 \sqcup \PP_2$. Since $I_1$ is not an endcut, then $\PP_2$ is a simple thin polyomino that is not a cell interval. Moreover, $\rk \PP_2 < \rk \PP$. Again from Lemma \ref{lem:exist}, there exists an inner interval $I_2$ in $\PP_2 $ for which there exists  one and only one inner  interval $J_2$ of $\PP$ intersecting $I_2$ in one cell. We write $\PP=\PP_3 \sqcup I_2 \sqcup \PP_4 $, with $\PP_1 \subset \PP_3$. We repeat the same argument for the simple thin polyomino $\PP_4$ with $\rk \PP_4 < \rk \PP_2$. By proceeding in this way, since the $\rk \PP$ is finite, at the end we find an inner interval $I_k$ for which $\PP=\PP_{2k-1} \sqcup I_k \sqcup \PP_{2k} $ such that $\rk \PP_{2k}=0$, namely $I_k$ is a tail, that is a contradiction. 
\end{proof}

We observe that the interval $I$ in Lemma \ref{lem:exist} in which $\PP$ is collapsible has one leaf $C$.

\begin{Lemma}\label{lem:colon}
Let $\PP$ be a simple polyomino with a leaf $C$ having leaf corners $u$ and $v$, and let $\PP'$  be as in Definition \ref{def:P'}. Then $((I_\PP,x_u):x_v)=I_{\PP'}+J$ where $J$ is a monomial ideal generated in degree one.
\end{Lemma}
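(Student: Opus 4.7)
The plan is to exploit that the leaf corners $u$ and $v$, by definition of a leaf, appear as vertices of no cell of $\PP$ other than $C$. Without loss of generality I place $C=[(i,j),(i+1,j+1)]$ with $u=(i,j)$ and $v=(i,j+1)$, so that $\{u,v\}$ is the left edge of $C$; the other three possible orientations of the leaf edge reduce to this one by reflecting $\NN^2$.

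The first step is to determine exactly which inner $2$-minors of $\PP$ involve $x_u$ or $x_v$. Any inner interval of $\PP$ having $u$ or $v$ as a corner must contain $C$; a case analysis on the four possible corner positions shows that any cell lying directly above or below $C$ in the leftmost column of the interval would force $u$ or $v$ into $V(\PP\setminus\{C\})$, violating the leaf hypothesis. So the only such inner intervals are the horizontal strips
\[
I_k=[(i,j),(i+k+1,j+1)],\quad k=0,1,\ldots,K,
\]
where $K\geq 0$ is the maximal index for which $I_K$ is still an inner interval of $\PP$. Writing $b_k=(i+k+1,j+1)$ and $d_k=(i+k+1,j)$, the inner $2$-minor associated with $I_k$ is $f_k=x_u x_{b_k}-x_{d_k}x_v$. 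Every remaining generator of $I_\PP$ comes from an inner interval not containing $C$; such intervals are precisely the inner intervals of $\PP'$, so they generate $I_{\PP'}$.

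Reducing the $f_k$ modulo $x_u$ now gives
\[
(I_\PP,x_u)=I_{\PP'}+(x_u)+(x_v x_{d_0},\ldots,x_v x_{d_K}).
\]
Set $S=K[x_w:w\in V(\PP')]$ so that $R=S[x_u,x_v]$, and identify $R/(x_u)\cong S[x_v]$. Under this identification the image of $(I_\PP,x_u)$ is $\tilde J := I_{\PP'}\,S[x_v]+x_v\,L\,S[x_v]$ with $L=(x_{d_0},\ldots,x_{d_K})\subset S$. For $f=\sum_{i\geq 0}f_i x_v^i\in S[x_v]$, the condition $f x_v\in \tilde J$, compared coefficient-by-coefficient in powers of $x_v$, reduces to $f_i\in I_{\PP'}+L$ for every $i$; here one uses crucially that $I_{\PP'}$ contributes only in $x_v$-degree zero while $x_v L$ contributes only in $x_v$-degree one. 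Hence $(\tilde J:x_v)=(I_{\PP'}+L)S[x_v]$, and pulling back to $R$ yields
\[
((I_\PP,x_u):x_v)=I_{\PP'}+(x_u,x_{d_0},x_{d_1},\ldots,x_{d_K}),
\]
so $J=(x_u,x_{d_0},\ldots,x_{d_K})$ is monomial and generated in degree one.

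The main obstacle is the classification step: one must systematically rule out every inner interval with $u$ or $v$ as a corner that is not one of the horizontal strips $I_k$, which is essentially a bookkeeping argument on where a larger interval could extend without capturing a forbidden cell. Once this \emph{monomial/binomial separation} is in place, the colon is handled by a short graded computation hinging on the fact that $x_v$ appears in each binomial generator of $(I_\PP,x_u)/(x_u)$ to degree exactly one.
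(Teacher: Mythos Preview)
Your proof is correct, but it takes a genuinely different route from the paper's.

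Both arguments begin the same way: classify the inner $2$-minors of $\PP$ touching $u$ or $v$ as exactly those coming from the horizontal strips $I_k$ containing $C$, and deduce
\[
(I_\PP,x_u)=I_{\PP'}+(x_u)+(x_v x_{d_0},\ldots,x_v x_{d_K}).
\]
The divergence is in how the colon by $x_v$ is computed. The paper writes $x_vf=g+x_vg'+x_ug''$ with $g\in I_{\PP'}$, $g'\in(x_{d_0},\ldots,x_{d_K})$, reduces to showing $f-g'\in(I_{\PP'}+(x_u)):x_v$, and then invokes the nontrivial fact that $I_{\PP'}$ is \emph{prime} (since $\PP'$ is simple) to conclude that $(I_{\PP'}+(x_u)):x_v=I_{\PP'}+(x_u)$.

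Your argument instead observes that after killing $x_u$ the ideal $\tilde J=I_{\PP'}S[x_v]+x_vLS[x_v]$ is homogeneous for the $\NN$-grading by $x_v$-degree, so membership in $\tilde J$ can be tested one $x_v$-coefficient at a time; this immediately gives $(\tilde J:x_v)=(I_{\PP'}+L)S[x_v]$. This is more elementary: it never appeals to primeness of $I_{\PP'}$, and hence does not rely on the structural results of Herzog--Saeedi Madani or Qureshi--Shibuta--Shikama that underlie Lemma~\ref{lem:dim}. The paper's route is shorter once primeness is available, but yours is self-contained. One small wording issue: your phrase ``$I_{\PP'}$ contributes only in $x_v$-degree zero while $x_vL$ contributes only in $x_v$-degree one'' describes the \emph{generators}, not the ideals; the precise fact you are using is that $\tilde J$ is $x_v$-graded, so its degree-$i$ piece equals $x_v^i(I_{\PP'}+L)$ for $i\ge 1$. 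With that clarification the coefficient comparison goes through exactly as you wrote.
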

\begin{proof}
Since $C$ is a leaf of $\PP$, then there exists a maximal cell interval $I$ of $\PP$ such that $C \in I$. Let $E=\{u_1, u_2,\ldots,u_r,u\}$ and $F=\{v_1,\ldots,v_r,v\}$ be the edge intervals of length $r+1$ of $I$. 
We observe that the ideal $I_\PP$ is generated by the inner $2$-minors of $\PP'=\PP\setminus \{C\}$ and by the inner $2$-minors of $I$ whose inner intervals contain the cell $C$, namely
\[
I_\PP= I_{\PP'}+(\{x_vx_{u_i}-x_ux_{v_i}\}_{i=1,\ldots,r}).
\]
Then
\[
(I_\PP,x_u)= I_{\PP'}+(\{x_vx_{u_i}\}_{i=1,\ldots,r})+(x_u).
\]
The thesis follows if we prove that $(I_\PP,x_u):x_v \subseteq I_{\PP'}+(x_{u_1},\ldots,x_{u_r},x_u)$, since the other inclusion is trivial. If $f \in (I_\PP,x_u):x_v$, then  $x_vf \in I_{\PP'}+(\{x_vx_{u_i}\}_{i=1,\ldots,r})+(x_u)$, that is 
\[
x_v f = g + x_vg' +x_ug''
\]
where $g \in I_{\PP'}$, $g' \in (x_{u_1},\ldots,x_{u_r})$ and, $g'' \in R$. That is, $x_{v}(f-g')\in I_{\PP'}+(x_u)$ and $f-g'\in (I_{\PP'}+(x_u)):x_v$. Since $\PP'$ is simple, then $I_{\PP'}$ is prime, and since $x_{u}$ is not a variable of $I_{\PP'}$,  then also $I_{\PP'}+(x_{u})$ is prime. Therefore, since $x_v \notin I_{\PP'}+(x_{u})$, then $f-g' \in I_{\PP'}+(x_{u})$ and the assertion follows. 
\end{proof}
\begin{Remark}\label{rem:notbin}
By using the notation of Lemma \ref{lem:colon}, we want to remark that the ideal in the statement has different behaviours, depending on the choice of $u$ and $v$.
Let $\PP$ be the simple thin polyomino in Figure \ref{fig:S}, namely the skew tetromino.

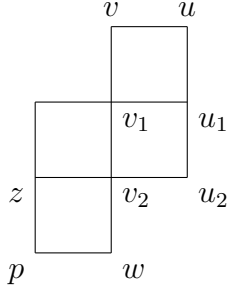
\begin{figure}[H]
\centering 
\begin{tikzpicture}
\draw (1,0)--(2,0);
\draw (1,1)--(3,1);
\draw (1,2)--(3,2);
\draw (2,3)--(3,3);

\draw (1,0)--(1,2);
\draw (2,0)--(2,3);
\draw (3,1)--(3,3);

\node at (2,3) [anchor=south]{$v$};
\node at (3,3) [anchor=south]{$u$};
\node at (2,2) [anchor=north west]{$v_1$};
\node at (3,2) [anchor=north west]{$u_1$};
\node at (2,1) [anchor=north west]{$v_2$};
\node at (3,1) [anchor=north west]{$u_2$};
\node at (2,0) [anchor=north west]{$w$};
\node at (1,1) [anchor=north east]{$z$};
\node at (1,0) [anchor=north east]{$p$};
\end{tikzpicture}
\caption{The skew tetromino}\label{fig:S}
\end{figure}
Since $x_vx_{u_2}-x_ux_{v_2}\in I_{\PP}$, then $x_ux_{v_2} \in (I_{\PP},x_v)$ and $x_{v_2} \in (I_{\PP},x_v):x_u$. Therefore, since $x_px_{v_2}-x_wx_z \in I_\PP$, then $x_wx_z \in (I_{\PP},x_v):x_u$, namely $(I_{\PP},x_v):x_u$ has a monomial generator of degree 2. Nevertheless, the ideal $(I_{\PP},x_u):x_v$ has not monomial generators of degree greater than 1.
\end{Remark}

\begin{Lemma}\label{lem:coronavirus}
Let $\PP$ be a simple thin polyomino, collapsible in $I$ that has $r$ cells, and let $\PP_1,\PP_2,\PP',\PP''$ be as in Definitions \ref{def:P'} and \ref{def:P''}. Let $C$ be a leaf of $I$ with leaf corners $u$ and $v$, and assume that $E=\{u_1, u_2,\ldots,u_r,u\}$ is the edge interval of $I$ such that $E\cap V(\PP_1)= \varnothing$. Then  $R/(I_\PP,x_u,x_v)\cong K[\PP']$ and $R/((I_\PP,x_u):x_v)\cong K[\PP'']\otimes K[y_1,\ldots,y_{r-1}]$.
\end{Lemma}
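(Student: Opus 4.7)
The plan is to establish each isomorphism by quotienting $R$ explicitly and matching variables and relations, using Lemma \ref{lem:colon} as the algebraic engine for the second part.

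For $R/(I_\PP,x_u,x_v)\cong K[\PP']$ I would observe that, since $u$ and $v$ are leaf corners of $C$, they occur as vertices of no cell of $\PP$ other than $C$; hence $V(\PP')=V(\PP)\setminus\{u,v\}$. Every inner interval of $\PP$ involving $u$ or $v$ must contain $C$ and, by the thinness of $\PP$ together with the leaf property, must be a sub-cell-interval of $I$ containing $C$, giving a $2$-minor of the form $x_v x_{u_k}-x_u x_{v_k}$, which vanishes modulo $(x_u,x_v)$. All other inner intervals of $\PP$ are inner intervals of $\PP'$, so $(I_\PP,x_u,x_v)=(I_{\PP'},x_u,x_v)$ and quotienting yields $K[V(\PP')]/I_{\PP'}=K[\PP']$.

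For the second isomorphism I invoke Lemma \ref{lem:colon} to get $(I_\PP,x_u):x_v=I_{\PP'}+(x_u,x_{u_1},\ldots,x_{u_r})$. Using the vertex decomposition $V(\PP)=V(\PP_1)\sqcup V(\PP_2)\sqcup\{u,v,u_1,v_1,\ldots,u_{r-2},v_{r-2}\}$, with $\{a,b\}=\{v_{r-1},v_r\}\subseteq V(\PP_1)$ and $\{a',b'\}=\{u_{r-1},u_r\}\subseteq V(\PP_2)$, the variables surviving in the quotient are indexed by $V(\PP_1)\cup(V(\PP_2)\setminus\{a',b'\})\cup\{v,v_1,\ldots,v_{r-2}\}$. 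The first two pieces, under the identifications $a\equiv a'$, $b\equiv b'$ that define $\PP''$, form $V(\PP'')$, while the last $r-1$ vertices are earmarked to play the role of the free polynomial variables $y_1,\ldots,y_{r-1}$.

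To match relations I classify the inner intervals of $\PP'$ and track each $2$-minor after setting $x_u,x_{u_1},\ldots,x_{u_r}$ to zero. Intervals entirely inside $\PP_1$, and intervals inside $\PP_2$ disjoint from its leftmost cell, survive unchanged and coincide with the corresponding inner $2$-minors of $\PP''$. Intervals inside $I\setminus\{C\}$, intervals spanning $\PP_1$ and $I$ (which must contain $D$), and intervals of $\PP_2$ containing its leftmost cell all produce $2$-minors both of whose monomials contain a killed variable, hence give the trivial relation. The essential matching is for the intervals of the form $[D,D_2^{(j)}]$ spanning $I$ and $\PP_2$: these survive and, after the identifications $x_a=x_{a'}$, $x_b=x_{b'}$, reproduce the $2$-minors of $[D_2^{(1)},D_2^{(j)}]$ in $\PP''$ that were lost in the killed case. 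Intervals spanning $\PP_1,I,\PP_2$ in $\PP$ survive and correspond to the new intervals in $\PP''$ spanning $\PP_1$ and $\PP_2$ after gluing.

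The main obstacle is exactly this bookkeeping: setting up the bijection between surviving $2$-minors in $S=R/((I_\PP,x_u):x_v)$ and the inner $2$-minors of $\PP''$. The delicate point is that the inner $2$-minors of $\PP_2$ touching $D$ are killed, but their translated counterparts in $\PP''$ are recovered from the surviving $2$-minors of $\PP'$-intervals crossing $D$ horizontally. Once this correspondence is in place, the relations of $S$ on the $\PP''$-variables generate $I_{\PP''}$ while the variables $v,v_1,\ldots,v_{r-2}$ stay free, producing the desired tensor decomposition $K[\PP'']\otimes K[y_1,\ldots,y_{r-1}]$.
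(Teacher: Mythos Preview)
Your approach is essentially the paper's: invoke Lemma~\ref{lem:colon} to get $(I_\PP,x_u):x_v=I_{\PP'}+(x_u,x_{u_1},\ldots,x_{u_r})$, then track which inner $2$-minors of $\PP'$ survive modulo the $E$-variables and match them with $I_{\PP''}$, leaving the $r-1$ vertices of $F\setminus V(\PP'')$ as free polynomial variables. The paper frames the matching abstractly (case (a)/(b) on $V(f)$), you give a finer case split on the position of the inner interval; these are equivalent.

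One slip: your vertex decomposition assumes $\{a,b\}=\{v_{r-1},v_r\}$, i.e.\ that $D$ sits at the end of $I$ opposite $C$. In general (see Figure~\ref{fig:oper}) $D$ has corners $\{u_k,u_{k+1},v_k,v_{k+1}\}$ for an arbitrary $k$, and $I$ may extend beyond $D$ on both sides, so the free variables are $\{v,v_1,\ldots,v_{k-1},v_{k+2},\ldots,v_r\}$. Your case analysis adapts verbatim once this is fixed. You should also briefly dispose of the tail case $\PP_2=\varnothing$, and make explicit that the hypothesis $E\cap V(\PP_1)=\varnothing$ (together with $\PP_2$ being a cell interval) is exactly what guarantees no $2$-minor of $\PP'$ has one corner in $E$ and the other three off $E$; this is what makes your ``survive unchanged'' versus ``both monomials die'' dichotomy exhaustive, and the paper isolates it as the claim that $E$ is a maximal edge interval of $\PP$.
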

\begin{proof}
Let $F=\{v_1,\ldots,v_r,v\}$ be the other edge interval of $I$ of length $r+1$.
By the proof of Lemma \ref{lem:colon}, we have
\[
I_\PP= I_{\PP'}+(\{x_vx_{u_i}-x_ux_{v_i}\}_{i= 1,\ldots,r}).
\]
and 
\[
(I_\PP,x_u)= I_{\PP'}+(\{x_vx_{u_i}\}_{i=1,\ldots,r},x_u).
\]
Since $\{u,v\}\cap V(\PP')=\varnothing$, then $(I_\PP,x_u,x_v)=(I_{\PP'},x_u,x_v)$, that is $R/(I_\PP,x_u,x_v)\cong K[\PP']$.

Now let $I''=((I_\PP,x_u):x_v)$. By the proof of Lemma \ref{lem:colon}, it arises $I''= I_{\PP'}+(x_{u_1},\ldots,x_{u_r},x_u)$. Let us consider $J$ and $D$ as in Definition \ref{def:P''}, with  $V(D)=\{u_{k},$ $u_{k+1},$ $v_{k},$ $v_{k+1}\}$. We can split $J$ into the cell intervals $J_1$ and $J_2$,  such that $J_1 \subseteq \PP_1$, $ \PP_2=J_2 $, and the cell $D$. Since the variables $x_{u_1},\ldots, x_{u_r}$, $x_u$ are generators of $I''$, then all of the inner $2$-minors of the interval $I$, and all of the inner $2$-minors of $J$ having corners on $u_{k},u_{k+1}$, are redundant.
Since $\PP_2$ is either empty or a cell interval, then the edge $E$ is a maximal edge interval of $\PP$ (see also Remark \ref{rem:notbin}). We want to prove that $I''$ has not minimal monomial generators of degree greater than 1. By Lemma 	\ref{lem:colon},  assume that there exists a minimal generator $x_wx_z \in I''$, with $w,z \notin \{u_1,\ldots,u_r,u\}=E$. That is there exists $i \in \{1,\ldots, r\}$ and $p \in V(\PP)$ such that $g=x_wx_z-x_{u_i}x_{p}$ is an inner 2-minor of $\PP$. That is one between $w$ and $z$, say $w$, lies on the same edge interval containing the $u_i$'s and $w \notin E$, namely $E \cup \{w\}$ is an edge interval of $\PP$ containing $E$, that is $E$ is not a maximal, contradiction.
 \\
If $\PP_2$ is empty, from Definition \ref{def:P''} we have $\PP''=\PP\setminus I=\PP_1$. Since $E\cap V(\PP_1)=\varnothing$, then $I''= I_{\PP_1}+(x_{u_1},\ldots,x_{u_r},x_u)$ , $V(\PP'') \cap F=\{v_{k},v_{k+1}\}$,  and therefore
\[
R/I''\cong K[\PP'']\otimes K[x_{v_1},\ldots, x_{v_{k-1}},x_{v_{k+2}}\ldots, x_{v_r},x_v]
\]
and the assertion follows.\\
Otherwise, let $\PP''$ be the polyomino arising from the translation of the edge  $\{u_k,u_{k+1}\}$ on the edge $\{v_k,v_{k+1}\}$. We want to prove that $I''=I_{\PP''}+(x_{u_1},\ldots,x_{u_r},x_u)$.

Let $f=f^+-f^- \in I''$ be an irreducible binomial and let
 \[
V(f)=\{v \in V(\PP) \ | \ x_v | f^+ \mbox{ or } x_v | f^-\}.
\]
One of the following is true 
\begin{itemize}
\item[(a)]$V(f) \subseteq V(\PP_1)$ or $V(f)\subseteq V(\PP_2)\setminus \{u_{k},u_{k+1}\}$;
\item[(b)] $|V(f) \cap V(\PP_1)|=|V(f) \cap V(\PP_2)\setminus \{u_{k},u_{k+1}\}|=2$.
\end{itemize} 
In case (a) we have $f \in I_{\PP''}$. \\
In case (b), since $J$ is the unique maximal cell interval having non-empty intersection with both $\PP_1$ and $\PP_2$, we have that $|V(f) \cap V(J_1)|=|V(f) \cap V(J_2)\setminus \{u_{k},u_{k+1}\}|=2$. Since $J_1 \cup J_2$ is a maximal cell interval of $\PP''$, then $f \in I_{\PP''}$. The latter proves $I'' \subseteq I_{\PP''}+(x_{u_1},\ldots,x_{u_r},x_u)$. Similarly the other inclusion follows, due to the fact that an inner interval in $\PP''$ is either an inner interval of $\PP_1$,  of $\PP_2$ (up to the translation defined in Definition \ref{def:P''}), or it is contained in $J_1 \cup J_2$.
Lastly, since $V(\PP'') \cap F=\{v_{k},v_{k+1}\}$, then
\[
R/I''\cong K[\PP'']\otimes K[x_{v_1},\ldots, x_{v_{k-1}},x_{v_{k+2}}\ldots, x_{v_r},x_v]
\]
\end{proof}

\begin{Corollary}\label{cor:HSP}
Let $\PP$ be a simple thin polyomino, collapsible in $I$ that has $r$ cells, with $\PP'$ and $\PP''$ as in Definitions \ref{def:P'} and \ref{def:P''}. Then
 \[
 \HP_{K[\PP]}(t)=\frac{1}{1-t}\Bigg(\HP_{K[\PP']}(t)+\frac{t}{(1-t)^{r-1}}\cdot \HP_{K[\PP'']}(t)\Bigg)
 \]
\end{Corollary}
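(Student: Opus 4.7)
The plan is to apply Proposition \ref{prop:hilbexact} twice in succession, using the two leaf corners $u,v$ of the leaf $C \subset I$, and then to invoke Lemma \ref{lem:coronavirus} to identify the two resulting quotients with $K[\PP']$ and (up to extra polynomial variables) $K[\PP'']$.

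First I would observe that $K[\PP]$ is a domain by Lemma \ref{lem:dim}, and since $I_\PP$ is a binomial prime that does not contain any variable, $x_u$ is a nonzero element of $K[\PP]$, hence a regular element. Applying Proposition \ref{prop:hilbexact}(2) with $f=x_u$ then yields
\[
\HP_{K[\PP]}(t)=\frac{1}{1-t}\,\HP_{R/(I_\PP,x_u)}(t).
\]
This is what produces the outer factor $1/(1-t)$ in the claimed formula.

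Next, I would apply Proposition \ref{prop:hilbexact}(1) to the ideal $(I_\PP,x_u)$ with the degree-$1$ element $f=x_v$, obtaining
\[
\HP_{R/(I_\PP,x_u)}(t)=\HP_{R/(I_\PP,x_u,x_v)}(t)+t\,\HP_{R/((I_\PP,x_u):x_v)}(t).
\]
(Note the choice of order matters: as Remark \ref{rem:notbin} records, the colon $(I_\PP,x_v):x_u$ may acquire higher-degree monomial generators, while $(I_\PP,x_u):x_v$ does not; this is precisely why we kill $x_u$ first.) Lemma \ref{lem:coronavirus} now identifies
\[
R/(I_\PP,x_u,x_v)\cong K[\PP'],\qquad R/((I_\PP,x_u):x_v)\cong K[\PP'']\otimes_K K[y_1,\dots,y_{r-1}].
\]

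Finally, since the Hilbert series is multiplicative on tensor products and a polynomial ring in $r-1$ variables of degree one contributes a factor $1/(1-t)^{r-1}$, I obtain
\[
\HP_{R/((I_\PP,x_u):x_v)}(t)=\frac{\HP_{K[\PP'']}(t)}{(1-t)^{r-1}}.
\]
Substituting this and $\HP_{R/(I_\PP,x_u,x_v)}(t)=\HP_{K[\PP']}(t)$ back into the two displayed identities gives the claim. I expect no real obstacle: the creative content is all packaged inside Lemma \ref{lem:coronavirus}, and the proof above is essentially a bookkeeping exercise combining that lemma with the standard short exact sequence for Hilbert series. The one point deserving care is justifying the order in which $x_u$ and $x_v$ are used (regularity of $x_u$ on $K[\PP]$, and the cleanness of the colon $(I_\PP,x_u):x_v$ guaranteed by the leaf configuration of $C$ within the collapsing interval $I$).
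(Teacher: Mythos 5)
Your proof is correct and follows essentially the same route as the paper: apply Proposition~\ref{prop:hilbexact}(2) with the regular element $x_u$ (regularity coming from $I_\PP$ being prime by Lemma~\ref{lem:dim}), then Proposition~\ref{prop:hilbexact}(1) with $x_v$, and identify the resulting quotients via Lemma~\ref{lem:coronavirus} and multiplicativity of Hilbert series on tensor products. Your remark on the order of $x_u$ and $x_v$ is exactly the point the paper makes via Remark~\ref{rem:notbin}.
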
 
\begin{proof}
Let $C$ be a leaf of $I$ and let $u$ and $v$ be the leaf corners of $C$ with $u$ satisfying the hypotheses of Lemma \ref{lem:coronavirus}.
We take the following short exact sequence:
\begin{center}
\begin{tikzcd}
    0\arrow{r} & R/(I_\PP : x_u)  \arrow{r}& R/I_\PP \arrow{r} & R/(I_\PP ,x_u)\arrow{r} & 0 \ \ \ \ \
\end{tikzcd}
\end{center}
Since $\PP$ is simple, then from Lemma \ref{lem:dim} $I_\PP$ is prime, that is $(I_\PP: x_u)=I_\PP$. Therefore, by Proposition \ref{prop:hilbexact}.(2) we have 
\[
\HP_{R/I_\PP}(t)= \frac{1}{1-t} \HP_{R/(I_\PP,x_u)} (t).
\]
We study the Hilbert series of $R/(I_\PP,x_{u})$. By applying Proposition \ref{prop:hilbexact} to the following short exact sequence:
\begin{center}
\begin{tikzcd}
    0\arrow{r} & R/((I_\PP,x_{u}) : x_v)  \arrow{r}& R/(I_\PP,x_{u}) \arrow{r} & R/(I_\PP,x_{u},x_v)\arrow{r} & 0 \ \ \
\end{tikzcd}
\end{center}
we get 
\[
\HP_{K[\PP]}(t)=\frac{1}{1-t}\Bigg(\HP_{R/(I_\PP,x_u,x_v)}(t)+t \cdot \HP_{R/((I_\PP,x_u):x_v)}(t)\Bigg).
\]
Furthermore, by Lemma \ref{lem:coronavirus}, we have 
\begin{enumerate}
 \item $R/(I_\PP,x_u,x_v)\cong K[\PP']$; 
 \item $R/((I_\PP,x_u):x_v)\cong K[\PP'']\otimes K[y_1,\ldots,y_{r-1}]$.
\end{enumerate} 
It is well known that 
\[
\HP_{K[y_{1},\ldots,y_{n}]}(t)=\frac{1}{(1-t)^n}
\]
and 
\[
\HP_{A \otimes B}(t)=\HP_{A}(t)\cdot \HP_{B}(t),
\]
that is 
\[
\HP_{R/((I_\PP,x_u):x_v)}(t)=\frac{1}{(1-t)^{r-1}}\cdot \HP_{K[\PP'']}(t)
\]
and the assertion follows.
\end{proof}

Let $\PP$ be a cell interval with $\rk \PP=r$. The ideal $I_\PP$ can be seen as the determinantal ideal of a $2 \times (r+1)$ matrix. The resolution of the above ideal is well-known (see \cite{BV,Ei}), as well as its Hilbert series. For the sake of completeness, we give the following result 

\begin{Lemma}\label{lem:cellin}
Let $\PP$ be a cell interval with $\rk \PP=r$. Then
\[
\HP_{K[\PP]}(t)=\frac{1+rt}{(1-t)^{r+2}}. 
\]
\end{Lemma}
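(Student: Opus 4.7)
My plan is to prove the formula by induction on $r = \rk \PP$, using the same short-exact-sequence technique that drives Corollary \ref{cor:HSP}. An alternative route, perhaps more natural from a purely commutative-algebra standpoint, is to observe that $I_\PP$ is nothing but the ideal of $2 \times 2$ minors of a generic $2 \times (r+1)$ matrix and to invoke the classical Hilbert series computation arising from the Eagon--Northcott resolution, see \cite{BV,Ei}; but the inductive approach is self-contained and fits the paper's flow.

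The base case $r = 1$ is direct: $\PP$ is a single cell, $I_\PP$ is a principal quadric in four variables, so $\HP_{K[\PP]}(t) = (1 - t^2)/(1 - t)^4 = (1 + t)/(1 - t)^3$, as claimed. For the inductive step, I would remove an end cell $C$ from $\PP$, with leaf corners $u$ and $v$, obtaining the cell interval $\PP'$ of rank $r - 1$. Using that $I_\PP$ is prime (Lemma \ref{lem:dim}) to apply Proposition \ref{prop:hilbexact}(2) with $f = x_u$, and then applying Proposition \ref{prop:hilbexact}(1) with $f = x_v$, I arrive at
\[
\HP_{K[\PP]}(t) = \frac{1}{1 - t}\Bigl(\HP_{R/(I_\PP, x_u, x_v)}(t) + t \cdot \HP_{R/((I_\PP, x_u) : x_v)}(t)\Bigr),
\]
which is literally the argument of Corollary \ref{cor:HSP} specialized to a single cell interval.

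The main technical step is then to identify the two quotient rings on the right. First, $R/(I_\PP, x_u, x_v) \cong K[\PP']$, since setting $x_u, x_v$ to zero kills exactly the generators of $I_\PP$ not already belonging to $I_{\PP'}$. Second, Lemma \ref{lem:colon} gives $(I_\PP, x_u) : x_v = I_{\PP'} + (x_{u_1}, \dots, x_{u_r}, x_u)$, where $u_1, \dots, u_r$ are the remaining vertices on the edge interval containing $u$. Since every generator of $I_{\PP'}$ contains a factor $x_{u_i}$, the colon quotient collapses to the polynomial ring $K[x_{v_1}, \dots, x_{v_r}, x_v]$ in $r + 1$ variables, of Hilbert series $1/(1-t)^{r+1}$. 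Feeding this together with the inductive hypothesis $\HP_{K[\PP']}(t) = (1 + (r-1)t)/(1-t)^{r+1}$ into the displayed formula gives $(1 + rt)/(1-t)^{r+2}$, closing the induction. The only step that warrants a second look is the colon identification; this is a direct specialization of Lemma \ref{lem:colon} with the end cell's leaf corners playing the role of $u$ and $v$, and it should cause no difficulty.
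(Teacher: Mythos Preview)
Your proof is correct, but the paper takes a different route. Rather than induct on $r$, the paper invokes the Eagon--Northcott resolution of the ideal of $2\times 2$ minors of a generic $2\times(r+1)$ matrix (citing \cite{BV,Ei}) to obtain the Betti numbers $\beta_{i,i+1}=i\binom{r+1}{i+1}$, writes the numerator of the Hilbert series as the alternating sum $1+\sum_{i=1}^{r}(-1)^i i\binom{r+1}{i+1}t^{i+1}$, and then manipulates the identity $i\binom{r+1}{i+1}=r\binom{r}{i}-\binom{r}{i+1}$ to factor this numerator as $(1+rt)(1-t)^r$.

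Your inductive argument is more elementary and entirely self-contained: it uses only primality of $I_\PP$, Lemma~\ref{lem:colon}, and the two exact sequences of Proposition~\ref{prop:hilbexact}, and it meshes seamlessly with the machinery of Corollary~\ref{cor:HSP} that drives the rest of Section~\ref{sec:hilb}. Indeed, your observation that every generator of $I_{\PP'}$ involves some $x_{u_i}$ makes the colon quotient collapse to a polynomial ring in $r+1$ variables, and the induction then closes with a one-line computation. The paper's approach, by contrast, is faster once the Betti numbers are taken as known, and it has the side benefit of exhibiting the full minimal free resolution rather than just the Hilbert series. Both arguments are valid; yours avoids external references, theirs avoids the induction.
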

\begin{proof}
By \cite[Corollary 6.2]{Ei}, $I_\PP$ has linear resolution, and $\beta_{i,i+1}=i\binom{r+1}{i+1}$ for $i=1,\ldots,r$. It is well-known that if $M$ is an $R$-module, then
\[
\HP_{M}(t)=\frac{1}{(1-t)^n}\sum\limits_{i=0}^n \sum\limits_{j\in \ZZ} (-1)^i \beta_{ij} t^j.
\]
That is, the Hilbert series of $K[\PP]$ is
\begin{equation}\label{eq:hilb}
\frac{1+\sum\limits_{i=1}^{r-1}(-1)^i  i \binom{r+1}{i+1}t^{i+1}+(-1)^{r}rt^{r+1}}{(1-t)^{2r+2}}. \tag{$\ast$}
\end{equation}
We study the coefficient $i \binom{r+1}{i+1}$ for $2\leq i \leq r-1$.
\[
i \binom{r+1}{i+1}=(i+1)\binom{r+1}{i+1}-\binom{r+1}{i+1}=
\]
\[
=(r+1)\binom{r}{i}-\binom{r+1}{i+1}=r\binom{r}{i}-\binom{r}{i+1}.
\]
Hence the numerator of Equation \eqref{eq:hilb} becomes
\[
1+\sum\limits_{i=1}^{r-1}(-1)^i \Bigg(r\binom{r}{i}-\binom{r}{i+1} \Bigg)t^{i+1}+(-1)^{r}rt^{r+1}=
\]
\[
=1+\sum_{i=2}^{r}(-1)^i\binom{r}{i} t^{i}+\sum\limits_{i=1}^{r}(-1)^i r\binom{r}{i} t^{i+1}-rt+rt=
\]
\[
(1-t)^r+rt(1-t)^r.
\]
That is 
\[
\HP_{K[\PP]}(t)=\frac{(1+rt)(1-t)^r}{(1-t)^{2r+2}},
\]
and the assertion follows.
\end{proof}

We now state the main theorem (see also Examples \ref{ex:canuzzo} and \ref{ex:stair}). 
\begin{Theorem}\label{thm:rookhilb}
Let $\PP$ be a simple thin polyomino with 
\[
\HP_{K[\PP]}(t)=\frac{h(t)}{(1-t)^d}.
\]
Then $h(t)$ is the rook polynomial of $\PP$. 
\end{Theorem}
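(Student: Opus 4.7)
The plan is to proceed by induction on $\rk\PP$, with base case $\PP$ a cell interval. For a cell interval of rank $r$, Lemma~\ref{lem:cellin} gives $h(t)=1+rt$, which coincides with the rook polynomial: the unique maximal cell interval has length $r$, so one can place $0$ rooks (one way) or $1$ rook ($r$ ways), giving $r_\PP(t)=1+rt$.

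For the inductive step, suppose $\PP$ is not a cell interval. By Proposition~\ref{prop:coll}, $\PP$ is collapsible in some maximal inner interval $I$ of $r$ cells, and $I$ has a leaf $C$. Let $\PP'$ and $\PP''$ be as in Definitions~\ref{def:P'} and~\ref{def:P''}. Using Lemma~\ref{lem:dim}, a direct vertex-and-cell count yields $\dim K[\PP']=d-1$ (removing the leaf cell discards one cell and two leaf-corner vertices) and $\dim K[\PP'']=d-r$ (in both tail and endcut cases the collapse removes the $r$ cells of $I$, while the two pairs of identified vertices $\{a,b\}\sim\{a',b'\}$ account for the overlap with $\PP_1$ and $\PP_2$). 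Writing the reduced Hilbert series as $h'(t)/(1-t)^{d-1}$ and $h''(t)/(1-t)^{d-r}$ and substituting into Corollary~\ref{cor:HSP}, the denominators align and we obtain
\[
h(t)=h'(t)+t\cdot h''(t).
\]
Since $\rk\PP'<\rk\PP$ and $\rk\PP''<\rk\PP$, the inductive hypothesis gives $h'(t)=r_{\PP'}(t)$ and $h''(t)=r_{\PP''}(t)$, so it remains to prove the combinatorial identity $r_\PP(t)=r_{\PP'}(t)+t\,r_{\PP''}(t)$.

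To establish this identity, we partition the rook placements of $\PP$ by whether they use the leaf $C$. Placements not using $C$ coincide with rook placements of $\PP'=\PP\setminus\{C\}$, because the maximal intervals of $\PP'$ agree with those of $\PP$ except that $I$ shortens to $I\setminus\{C\}$, and the non-attacking condition is unchanged. For placements using $C$, the leaf hypothesis forces every cell of $\PP$ sharing a vertex with $C$ outside $I$ to be absent, so a rook on $C$ attacks precisely the cells of $I$. The remaining $k-1$ rooks therefore lie in $\PP\setminus I=\PP_1\sqcup\PP_2$, subject to the constraint inherited from the maximal interval $J=J_1\cup\{D\}\cup J_2$ of $\PP$, namely that at most one rook lies in $J_1\cup J_2$. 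Under the collapse, $J_1$ and $J_2$ merge into a single maximal interval of $\PP''$, imposing exactly this constraint; all other maximal intervals of $\PP_1$ and $\PP_2$ survive unchanged in $\PP''$ because only $J$ meets $I$ in a cell. This produces the desired bijection between $k$-placements of $\PP$ using $C$ and $(k-1)$-placements of $\PP''$.

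The main obstacle is the last paragraph: one must verify that the maximal-interval structure of $\PP''$ differs from that of the disjoint union $\PP_1\sqcup\PP_2$ only by the fusion of $J_1$ and $J_2$, and that this fusion is exactly the constraint imposed by the rook on $C$ blocking $D$. Both ingredients rely on the collapsibility hypothesis (only $J$ intersects $I$ in a cell, so no other maximal interval is affected by removing $I$) together with the leaf structure on $I$ (the cells of $I\setminus\{D\}$ lie in no other maximal interval besides $I$, so deleting them does not create new adjacencies in $\PP''$).
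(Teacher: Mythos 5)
Your proposal follows essentially the same route as the paper: induction on $\rk\PP$ with Lemma~\ref{lem:cellin} as the base case, Proposition~\ref{prop:coll} to find a collapsible interval, Corollary~\ref{cor:HSP} to decompose the Hilbert series, the dimension count $d_1+1=d_2+r=d$, and the combinatorial identity $r_\PP(t)=r_{\PP'}(t)+t\,r_{\PP''}(t)$ by partitioning placements according to whether the leaf $C$ is used. Your treatment of the bijection with $(k-1)$-placements of $\PP''$ is a bit more explicit than the paper's (you spell out that a rook on $C$ attacks exactly $I$, that the fused interval $J_1\cup J_2$ imposes precisely the constraint inherited from $J$, and that all other intervals of $\PP_1\sqcup\PP_2$ survive unchanged), but these are refinements of the same argument, not a different one.
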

\begin{proof}
Let $I_1,\ldots I_s$ be the maximal inner intervals of $\PP$. We proceed by induction on $p=\rk \PP$. \\
If $p=1$, then $\PP$ consists of one cell and by Lemma \ref{lem:cellin}, the statement follows. \\
Let $p>1$ and assume the thesis true for any polyomino with rank less than or equal to $p-1$. 
If $s=1$, then $\PP$ is a cell interval and from by \ref{lem:cellin} we have
\[
\HP_{K[\PP]}(t)=\frac{1+pt}{(1-t)^{p+2}}.
\]
The polynomial $1+pt$ is the rook polynomial of a cell interval having $p$ cells, that is the assertion follows.
If $s>1$, then  $\PP$ is not a cell interval, that is, from Proposition \ref{prop:coll}, $\PP$ is collapsible in some maximal inner interval $I$. Assume that $I$ has $r$ cells. In order to apply Corollary \ref{cor:HSP}, we focus on $\mathrm{HP}_{K[\PP']}(t)$ and $\mathrm{HP}_{K[\PP'']}(t)$. The polyomino  $\PP'$ has $p-1$ cells, while the polyomino $\PP''$ has $p-r$ cells. Hence, from the inductive hypothesis we have 
\[
   \mathrm{HP}_{K[\PP']}(t)=\frac{\sum\limits_{i=0}^{a} r'_i t^i}{(1-t)^{d_1}},
   \]
   where $a=r(\PP)$  with $r'_a \geq 0$ due to Remark \ref{rem:rook},  and $\sum\limits_{i=0}^{a} r'_i t^i$ is the rook polynomial of $\PP'$, and 
   \[
   \mathrm{HP}_{K[\PP'']}(t)=\frac{\sum\limits_{i=0}^{b} r''_i t^i}{(1-t)^{d_2}}.
   \]
   where $b=r(\PP'')=r(\PP)-1$ due to Remark \ref{rem:rook}, and $\sum\limits_{i=0}^{b} r''_i t^i$ is the rook polynomial of $\PP''$.
   From Corollary \ref{cor:HSP} we get 
   \[
   \HP_{K[\PP]}(t)=\frac{1}{1-t}\Bigg(\frac{\sum\limits_{i=0}^{a} r'_i t^i}{(1-t)^{d_1}}+\frac{1}{(1-t)^{r-1}}\frac{\sum\limits_{i=0}^{b}r''_i t^{i+1}}{(1-t)^{d_2}}\Bigg)=\frac{\sum\limits_{i=0}^{a} r'_i t^i}{(1-t)^{d_1+1}}+\frac{\sum\limits_{i=0}^{b}r''_i t^{i+1}}{(1-t)^{d_2+r}}
   \]
We first show that $d_1+1=d_2+r=n-p$, where $n=|V(\PP)|$. Since $\PP'$ is the polyomino having $n-2$ vertices and $p-1$ cells, then from Lemma \ref{lem:dim} we have $(n-2)-(p-1)=n-p-1$. Moreover, since $I$ is on the $2r+2$ vertices $\{x_1,\ldots, x_r,x,y_1,\ldots,y_r,y\}$ but $y_k,y_{k+1}$ for some $k$ are corners of one cell of $\PP\setminus I$, then $\PP''$ is the polyomino having $n-2r$ vertices and $p-r$ cells, hence from Lemma \ref{lem:dim} $d_2+r-1=(n-2r)-(p-r)+r-1=n-p-1$. 
That is 
\[
 \HP_{K[\PP]}(t)=\frac{1+\sum\limits_{i=1}^{r(\PP)}(r'_{i}+r''_{i-1})t^i}{(1-t)^d}
\]
For $1\leq i\leq r(\PP)$, $r_i=r'_{i}+r''_{i-1}$. In fact, $r_i$ is the number of ways of placing $i$ non-attacking rooks on all of the cells of $\PP$,  whereas $r'_i$ is the number of ways of placing $i$ non-attacking rooks on the simple thin polyomino $\PP'$, namely the number of ways of placing $i$ non-attacking rooks on the cells $D\neq C$ of $\PP$, and  $r''_{i-1}$  is the number of ways of placing $i-1$ non-attacking rooks on the simple thin polyomino  $\PP''$, namely the number of ways of placing $i-1$ non-attacking rooks on the cells $D$ of $\PP$ such that $D \notin I$, given that the $i$-th rook is placed on the cell $C$, hence the thesis follows.
\end{proof}
We immediately deduce the following
\begin{Corollary}
Let $\PP$ be a simple thin polyomino. Then the Castelnuovo-Mumford regularity is $r(\PP)$ and the multiplicity of $K[\PP]$ is $r_\PP(1)$.
\end{Corollary}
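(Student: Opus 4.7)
The plan is to deduce this corollary directly from Theorem \ref{thm:rookhilb} together with standard facts about the Hilbert series of Cohen--Macaulay graded algebras, so essentially all the hard work has already been done in proving the main theorem.

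First, I would recall from Lemma \ref{lem:dim} that for a simple polyomino $\PP$ the algebra $K[\PP]$ is a Cohen--Macaulay domain. Consequently, as noted after the definition of the $a$-invariant in Section \ref{sec:pre}, one has the equality $\reg K[\PP]=\deg h(t)$, where $h(t)$ is the numerator of the reduced Hilbert--Poincar\'e series $\HP_{K[\PP]}(t)=h(t)/(1-t)^d$. By Theorem \ref{thm:rookhilb}, $h(t)=r_\PP(t)$, and since $\deg r_\PP(t)=r(\PP)$ by definition of the rook number, it follows immediately that $\reg K[\PP]=r(\PP)$.

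Next, for the multiplicity, I would invoke the standard formula that for a $d$-dimensional Cohen--Macaulay graded algebra $A$ with reduced Hilbert series $h(t)/(1-t)^d$, the Hilbert--Samuel multiplicity equals $h(1)$. Applying this with $A=K[\PP]$ and using again Theorem \ref{thm:rookhilb} to identify $h(t)$ with $r_\PP(t)$, we get that the multiplicity of $K[\PP]$ is $r_\PP(1)$. Note that $r_\PP(1)=\sum_{k=0}^{r(\PP)}r_k$ counts the total number of non-attacking rook configurations on $\PP$ (including the empty configuration), so the statement has a clean combinatorial meaning.

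There is essentially no obstacle here, since both claims are immediate consequences of Theorem \ref{thm:rookhilb} and the Cohen--Macaulay property. The only care needed is to cite the correct general fact (regularity $=\deg h(t)$ and multiplicity $=h(1)$ for Cohen--Macaulay standard graded algebras), both of which are already implicitly used in Section \ref{sec:pre}.
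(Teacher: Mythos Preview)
Your proposal is correct and matches the paper's approach exactly: the paper presents this corollary with no proof at all, merely stating ``We immediately deduce the following,'' and your argument spells out precisely the immediate deduction intended, namely combining Theorem \ref{thm:rookhilb} with the Cohen--Macaulay facts from Section \ref{sec:pre}.
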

\begin{Remark}\label{rem:nonthin}
In general the equality  $h(t)=r_{\PP}(t)$ does not hold for any simple polyomino $\PP$. Let $\PP$ be the square tetromino. Then, by using Macaulay2 we find that \[
h(t)=1+4t+t^2 \mbox{ and } r_\PP(t)=1+4t+2t^2.
\]
Even though the two polynomials are different, they have the same degree, that is $\reg K[\PP]=r(\PP)$ also in this case.
\end{Remark}

\section{Gorenstein simple thin polyominoes}\label{sec:Gorenstein}
In this section we characterize the Gorenstein simple thin polyominoes.
We start with a fundamental definition for our goal.
\begin{Definition}
Let $\PP$ be a simple thin polyomino.
A cell $C$ of $\PP$ is \emph{single} if there exists a unique maximal inner interval of $\PP$ containing $C$. If any maximal inner interval of $\PP$ has exactly one single cell, we say that $\PP$ has the \emph{S-property}.
\end{Definition}

Let $\CC$ be the set of the single cells of a simple thin polyomino. We set $\DD$ as the collection of cells $\PP\setminus \CC$. In particular since $\PP$ is thin,  then any cell of $\DD$ belongs exactly to two maximal inner intervals of $\PP$.

\begin{Theorem}\label{thm:Gore}
Let $\PP$ be a simple thin polyomino, $I_1$, $\ldots,$ $I_s$ be its maximal inner intervals,
and let $r_{\PP}(t)=\sum_{k=0}^s r_k t^k$ be its rook polynomial. Then the following conditions are equivalent:
\begin{itemize}
\item[\mbox{(a)}] $K[\PP]$ is Gorenstein;
\item[\mbox{(b)}]$\forall i=0,\ldots,s$ we have  $r_i=r_{s-i}$;
\item[\mbox{(c)}] $\PP$ satisfies the $S$-property.
\end{itemize}
\end{Theorem}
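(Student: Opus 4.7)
The proof revolves around a combinatorial identity for $r_\PP(t)$ in terms of the \emph{interval tree} $G_\PP$, whose vertices are the maximal inner intervals $I_1,\dots,I_s$ and whose edges are the shared cells (elements of $\DD$). The simplicity and thinness of $\PP$, together with Lemma \ref{lem:maximal} and the uniqueness-of-paths argument used in Lemma \ref{lem:exist}, ensure that $G_\PP$ is indeed a tree. Since $\PP$ is thin, two cells attack each other iff they share a maximal inner interval, so every non-attacking rook placement on $\PP$ corresponds bijectively to a matching $M$ of $G_\PP$ (the shared cells that carry rooks, viewed as edges) together with, for each interval $I_j$ not saturated by $M$, a choice of at most one single cell of $I_j$ carrying a rook. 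Writing $s_j=|\CC\cap I_j|$, this bijection yields the identity
\[
  r_\PP(t)=\sum_{M\text{ matching of }G_\PP} t^{|M|}\prod_{I_j\notin V(M)}(1+s_j t).
\]

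The equivalence (c) $\Leftrightarrow$ (b) falls out of this identity. Under the $S$-property, every $s_j$ equals $1$, so the identity reduces to $r_\PP(t)=\sum_M t^{|M|}(1+t)^{s-2|M|}$; replacing $t$ by $1/t$ and multiplying by $t^s$ reproduces the same expression, so $r_\PP$ is palindromic of formal degree $s$, giving (b). Conversely, in the general identity any matching $M$ with $|M|\geq 1$ contributes a polynomial of degree at most $s-|M|<s$, so the coefficient of $t^s$ equals $\prod_j s_j$ (from the empty matching alone). Palindromicity of formal degree $s$ forces $r_s=r_0=1$, hence every $s_j=1$, which is the $S$-property.

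For (a) $\Leftrightarrow$ (b), recall that $K[\PP]$ is a Cohen-Macaulay domain (Lemma \ref{lem:dim}) whose $h$-polynomial is $r_\PP(t)$ (Theorem \ref{thm:rookhilb}). Stanley's Theorem \ref{thm:St} identifies Gorensteinness with palindromicity of $r_\PP$ of its own degree $r(\PP)$. Condition (b) forces $r_s=r_0=1$, hence $\deg r_\PP=s=r(\PP)$, so palindromicity of formal degree $s$ matches Stanley's condition and (b) $\Rightarrow$ (a). The reverse direction (a) $\Rightarrow$ (b) requires the further fact that Gorenstein $K[\PP]$ forces $r(\PP)=s$: otherwise some $s_{j_0}=0$, and expanding the identity shows $r_1=\sum_j s_j+|E(G_\PP)|$ while $r_{r(\PP)-1}$ strictly exceeds $r_1$, contradicting palindromicity of degree $r(\PP)$. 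This strict inequality is the main technical obstacle; alternatively, an induction on $\rk\PP$ using Proposition \ref{prop:coll} and Corollary \ref{cor:HSP}, exploiting the recursive shape of the identity, yields the same conclusion.
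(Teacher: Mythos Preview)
Your matching identity
\[
r_\PP(t)=\sum_{M\text{ matching of }G_\PP} t^{|M|}\prod_{I_j\notin V(M)}(1+s_j t)
\]
is correct and gives a genuinely different route to (b)$\Leftrightarrow$(c) than the paper's. The paper constructs an explicit bijection between $d$-placements and $(s{-}d)$-placements for (c)$\Rightarrow$(b), and for (b)$\Rightarrow$(c) runs a case analysis on the number $q$ of single cells in an offending interval; you instead read both directions off the single observation that only the empty matching contributes in degree~$s$, so $r_s=\prod_j s_j$, whence $r_s=r_0=1$ forces every $s_j=1$. This is a real simplification, and (b)$\Rightarrow$(a) is also fine.

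The implication (a)$\Rightarrow$(b), however, is not established. You correctly isolate the issue---Stanley's criterion yields palindromicity of degree $r(\PP)$, while (b) asks for palindromicity of formal degree~$s$---and you propose to rule out $r(\PP)<s$ via the inequality $r_{r(\PP)-1}>r_1$ whenever some $s_{j_0}=0$. But this inequality is never proved, and it is not even meaningful when $r(\PP)=2$, where it collapses to $r_1>r_1$. Concretely, take $G_\PP$ the path on three vertices with $s_1=s_3=1$, $s_2=0$ (the skew tetromino of Figure~\ref{fig:S}): then $r_\PP(t)=1+4t+3t^2$, $r(\PP)=2$, and palindromicity of degree~$2$ fails because $r_2=3\neq 1=r_0$, not through any comparison of $r_{r(\PP)-1}$ with $r_1$. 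Your fallback ``alternatively, an induction\dots'' is only a gesture, with no argument supplied. What is actually needed is a proof that some $s_j\neq 1$ forces $t^{r(\PP)}r_\PP(1/t)\neq r_\PP(t)$. Within your framework a natural target would be to show that either $r_{r(\PP)}>1$, or else $r_{r(\PP)}=1$ and $r_{r(\PP)-1}>r_1$; the paper accomplishes the analogous step by exhibiting explicit surplus configurations (the discussion around Figure~\ref{fig:stairs}), and some comparable combinatorial work is unavoidable here but is missing from your write-up.
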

\begin{proof} 
(a)$\Leftrightarrow$(b): By combining Theorem \ref{thm:St} and Theorem \ref{thm:rookhilb}, for a simple thin polyomino $\PP$, the Cohen-Macaulay domain $K[\PP]=R/I_{\PP}$ is Gorenstein if and only if $\forall i=0,\ldots,s$ we have  $r_i=r_{s-i}$, and the assertion follows.\\
(c)$\Rightarrow$(b): Since $\PP$ satisfies the $S$-property, then any maximal inner interval $I$ of $\PP$ contains a unique single cell $C$. Therefore, let $\CC=\{C_1, \ldots C_s\}$ be the set of the single cells of $\PP$, and let $I_1,\ldots, I_s$ be the maximal inner intervals of $\PP$ such that $C_i \in I_i$. We set $\DD=\PP\setminus \CC$. As we have observed above, any cell of $\DD$ is the intersection of two maximal inner intervals of $\PP$, and we denote by $D_{jk}$ the cell of $\DD$ in the intersection of $I_j$ and $I_k$.

Let ${\bf i}$ be a subset of $[s]$ of cardinality $l$, and let ${\bf jk}=\{ \{j_1,k_1\},\ldots, \{j_m,k_m\}\}$ with $j_t,k_t\in [s]$ for $1\leq t\leq m$. We denote by $\CC_{\bf i}=\{C_i \in \CC :  \ i\in \bf i\}$ and by $\DD_{\bf jk}=\{D_{jk}\in \DD: \{j,k\}\in {\bf jk}\}$. 

Let ${\bf j}=\{j_1,\ldots j_{m}\}$ and ${\bf k}=\{k_1,\ldots k_{m}\}$ be such that $\bf j\cap \bf k=\varnothing$ and let $\bf i$ be such that $\bf i \cap (j\sqcup k)=\varnothing$ then 
\begin{equation}\label{eq:non-attackingSet}
 \CC_{\bf i}\cup \DD_{\bf jk}
\end{equation}
induces a set of $d=l+m$ non-attacking rooks, and any set of non-attacking rooks of cardinality $d$ can be written in the form \eqref{eq:non-attackingSet}, and this configuration is unique because a set ${\bf jk}$ identifies a unique subset of $\DD$ and thanks to the $S$-property a set $\bf i\subset [s]$ identifies a unique subset of $\CC$.
Our goal is to prove that for any configuration \eqref{eq:non-attackingSet} of $d$  non-attacking rooks  there exists a unique configuration of the form \ref{eq:non-attackingSet} of $s-d$ non-attacking rooks. Let $\overline{\CC}_{{\bf i} \cup {\bf i} \cup {\bf k} }=\CC\setminus (\CC_{\bf i} \cup \CC_{\bf j} \cup \CC_{\bf k} )$, and since $\bf i \cap (j\cup k)=\varnothing$, then $|\overline{\CC}_{\bf i \cup  j \cup k}|=s-(l+2m)$. From the configuration of cardinality $d$ in \eqref{eq:non-attackingSet}, we retrieve the following configuration of cardinality $s-d$,
\begin{equation}\label{eq:non-attackingSetDual}
\overline{\CC}_{\bf i \cup  j \cup k} \cup \DD_{\bf jk}.
\end{equation}
In fact, $s-(l+2m)+m=s-d$ and the configuration \eqref{eq:non-attackingSetDual} satisfies the properties of configuration \eqref{eq:non-attackingSet}, and the configuration \eqref{eq:non-attackingSetDual} is uniquely determined by \eqref{eq:non-attackingSet} because $\DD_{\bf jk}$ is fixed, and once we set $\CC_{\bf i}$ and $\bf{j\cup k}$, the complement set $\overline{\CC}_{\bf i \cup  j \cup k}$ is unique.

(b)$\Rightarrow$(c): By contraposition, assume that $\PP$ does not satisfy the $S$-property, that is there exists an inner interval $I$ of $\PP$ having $q$ single cells with $q\neq 1$. We want to prove that either $r_{s}>r_{0}=1$ or $r_{s-1}>r_{1}=\rk \PP$. 

Let $q>1$, and let $C,C'$ be two single cells of $I$. Any set $\CC$ of $s$ non-attacking rooks contains a single cell $C''$ of $I$ such that either $C''\neq C$ or $C'' \neq C'$. In both cases the sets $\CC\setminus \{C''\} \cup C$ and $\CC\setminus \{C''\} \cup C'$ are two distinct sets of $s$ non-attacking rooks, that is $r_s>1$, and it is a contradiction. \\
Hence, from now on we assume that in $\PP$ do not exist maximal inner intervals with two or more single cells. That is, any maximal inner interval of $\PP$ has either $0$ or $1$ single cells and in particular we assume $q=0$. Let $\CC$ be a set of $s$ non-attacking rooks of $\PP$. In this case one of the following is true:
\begin{enumerate}
\item any inner interval $J$ intersecting $I$ in a cell $D$ contains a cell $C\neq D$ such that $C\in \CC$, in particular $I \cap \CC= \varnothing$;
\item there exists an inner interval $J$ intersecting $I$ in a cell $D\in \CC$.
\end{enumerate}
In case (1), $(\CC \setminus \{C\})\cup \{D\}$ is a set of $s$ non-attacking rooks different from $\CC$,  that is $r_s>1$, and it is a contradiction.

In case (2), we want to show $r_{s-1}>r_1$. Let $E$ be a cell of $\PP$. If $E \in \CC$, then $\CC\setminus \{E\}$ is a set of $s-1$ non-attacking rooks. If $E \notin \CC$, then $E$ is not single, that is $E$ is the intersection of two cell intervals $I_1$ and $I_2$. From the maximality of $\CC$, there exist two cells $F\in I_1$ and  $G\in I_2$ with $F,G \in \CC$, and $\CC\setminus \{F,G\}\cup \{E\}$ is a set of $s-1$ non-attacking rooks. Hence $r_{s-1}\geq r_1$.
	
The hypothesis (2) implies that there exist some cells  $A,B,C_1,C_2$ of $\PP$ such that the polyomino  $\mathcal{Q}$ in Figure \ref{fig:stairs} is a subpolyomino of $\PP$ (up to rotations and reflections). In fact, without loss of generality assume that $A$ is a cell of $I$ and $B$ is a cell of $J$. Since $I$ has no single cells there exists an inner interval $J'$ intersecting $I$ in $A$. Moreover, if the cell $B$ is single, then $B \in \CC$ and this contradicts (2). Hence there exists an inner interval $J''$ intersecting $J$ in $B$.

\begin{figure}[H]
\centering
\resizebox{0.3\textwidth}{!}{
\begin{tikzpicture}
\draw (0,3)--(2,3);
\draw (0,2)--(3,2);
\draw (1,1)--(3,1);
\draw (2,0)--(3,0);

\draw (0,3)--(0,2);
\draw (1,3)--(1,1);
\draw (2,3)--(2,0);
\draw (3,2)--(3,0);

\node  at (1.5,2.5) {$A$}; 
\node  at (1.5,1.5) {$D$}; 
\node  at (2.5,1.5) {$B$}; 
\node  at (0.5,2.5) {$C_1$}; 
\node  at (2.5,0.5) {$C_2$}; 
\end{tikzpicture}}\caption{A simple thin polyomino $\mathcal{Q}$ that does not satisfy the $S$-property}\label{fig:stairs}

\end{figure}
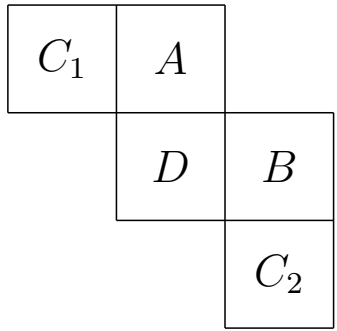

Let $F$ and $G$ be the cells of $\CC$ that belong respectively to $J'$ and $J''$.
We consider the following sets of $s-1$ non-attacking rooks:
\[
 \CC\setminus \{F,D\} \cup \{A\},\CC\setminus \{G,D\} \cup \{B\},\ \CC\setminus \{F,G,D\} \cup \{A,B\}.
\]
The first two were mentioned in the discussion above, while the third one increases the number $r_{s-1}$. Hence $r_{s-1}>r_1$, that is a contradiction.

\end{proof}
\begin{Example}\label{ex:canuzzo}
Let $\PP$ be the polyomino in Figure \ref{fig:canuzzo}. 
\begin{figure}[H]
\centering
\resizebox{0.3\textwidth}{!}{
\begin{tikzpicture}
\draw (0,2)--(0,3);
\draw (1,0)--(1,3);
\draw (2,0)--(2,3);
\draw (3,0)--(3,2);
\draw (4,0)--(4,2);

\draw (1,0)--(2,0);
\draw (3,0)--(4,0);
\draw (1,1)--(4,1);
\draw (0,2)--(4,2);
\draw (0,3)--(2,3);

\node  at (0.5,2.5) {$C_1$}; 
\node  at (1.5,2.5) {$D_{12}$}; 
\node  at (1.5,1.5) {$D_{23}$}; 
\node  at (1.5,0.5) {$C_2$}; 
\node  at (2.5,1.5) {$C_3$}; 
\node  at (3.5,1.5) {$D_{34}$}; 
\node  at (3.5,0.5) {$C_4$}; 
\end{tikzpicture}}\caption{A simple thin polyomino satisfying the $S$-property}\label{fig:canuzzo}
\end{figure}
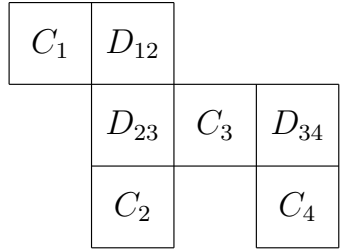
We see that $\PP$ has $4$ maximal inner intervals and a single cell for any of these ones, that is $\PP$ satisfies the $S$-property. We want to compute the Hilbert series of $K[\PP]$. It is easy to see that $r(\PP)=4$. According to Theorem \ref{thm:rookhilb}, the Hilbert series of $K[\PP]$ is 
\[
\HP_{K[\PP]}(t)=\frac{\sum\limits_{i=0}^4 r_it^i}{(1-t)^d}
\]
where $d=|V(\PP)|-\rk \PP=16-7=9$. We compute $r_{i}$, namely the number of sets of $i$ non-attacking rooks for $i=0,\ldots,4$.
\begin{enumerate}
\item[($i=0$)] $\varnothing$;
\item[($i=1$)] $\{C_1\},\{C_2\},\{C_3\},\{C_4\},\{D_{12}\},\{D_{23}\},\{D_{34}\}$;
\item[($i=2$)] $\{C_1,D_{23}\},\{C_1,C_2\},\{C_1,C_3\},\{C_1,D_{34}\},\{C_1,C_4\},\{D_{12},C_3\},\{D_{12},D_{34}\},  $ \\ $\{D_{12},C_4\}, \{C_2,C_3\}, \{C_2,D_{34}\},\{C_2,C_4\}, \{D_{23},C_4\},\{C_3,C_4\} $;
\item[($i=3$)] $\{C_1,C_2,C_3\},\{C_1,C_2,C_4\},\{C_1,C_3,C_4\},\{C_2,C_3,C_4\},\{C_1,C_2,D_{34}\},\{C_1,D_{23},C_4\}$\\ $\{D_{12},C_{3},C_4\}$; 
\item[($i=4$)] $\{C_1,C_2,C_3,C_4\}$.
\end{enumerate}
It follows
\[
r_{0}=1, \ r_{1}=7, \ r_{2}=13 , \ r_{3}=7 , \ r_{4}=1,
\]
that is 
\[
\HP_{K[\PP]}(t)=\frac{1+7t+13t^2+7t^3+t^4}{(1-t)^9}
\]
and according to Theorem \ref{thm:St}, $K[\PP]$ is Gorenstein.
\end{Example}
\begin{Example}\label{ex:stair}
In the notation of Theorem \ref{thm:Gore}, we highlight that the condition $r_s=1$ is not sufficient to guarantee that the polynomial has symmetric coefficients. In fact, let us consider the polyomino $\mathcal{Q}$ in Figure \ref{fig:stairs}. The rook number of $\mathcal{Q}$ is $3$ and the rook polynomial of $\mathcal{Q}$ is 
\[
1+5t+6t^2+t^3,
\]
in fact, the sets of $i$ non-attacking rooks are
\begin{enumerate}
\item[($i=0$)] $\varnothing$;
\item[($i=1$)] $\{A\},\{B\},\{C_1\},\{D\},\{C_2\}$;
\item[($i=2$)] $\{C_1,D\},\{C_1,C_2\},\{D,C_2\},\{B,C_1\},\{A,C_2\},\{A,B\}$;
\item[($i=3$)] $\{C_1,D,C_2\}$;
\end{enumerate}
As already noted in the proof of Theorem \ref{thm:Gore} the fact that $r_2>r_1$ depends on the set $\{A,B\}$. 
\end{Example}

To conclude the paper, we want to remark that among the thin polyominoes that are not simple, namely multiply-connected, there are some non-prime ones, so that we can not directly retrieve the Cohen-Macaulayness of $K[\PP]$. Nevertheless, due to Theorem \ref{thm:rookhilb} and Remark \ref{rem:nonthin}, we conjecture the following 

\begin{Conjecture}
Let $\PP$ be a polyomino. Then $\PP$ is thin if and only if $r_{\PP}(t)=h(t)$.
\end{Conjecture}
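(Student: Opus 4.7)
The plan is to split the Conjecture into its two implications and to try to extend the paper's collapsing toolkit, supplying a new algebraic ingredient that compensates for the loss of simplicity when holes are allowed.

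For ``$\PP$ thin $\implies r_\PP(t)=h(t)$'', the combinatorial part of Section \ref{sec:hilb} carries over essentially unchanged: Definition \ref{def:P''} and Proposition \ref{prop:coll} use only thinness and never the absence of holes, so a thin polyomino with holes still collapses along a maximal inner interval. The obstruction is algebraic. Lemma \ref{lem:dim}, the splitting in Lemma \ref{lem:coronavirus}, and Corollary \ref{cor:HSP} all rely on primality of $I_\PP$, which is used to declare the leaf variable $x_u$ regular on $R/I_\PP$ and to simplify the colon ideal. My plan is to prove Cohen--Macaulayness and squarefreeness of an initial ideal of $I_\PP$ for every thin polyomino under a diagonal-style term order, and then argue, via the resulting simplicial complex $\Delta$, that $h(t)$ is the $h$-polynomial of $\Delta$. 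It would then suffice to exhibit a bijection between the faces of $\Delta$ and sets of non-attacking rooks on $\PP$, with thinness precisely ruling out the ``extra'' anti-diagonal face that Remark \ref{rem:nonthin} exposes for the square tetromino.

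For the contrapositive of ``$r_\PP(t)=h(t) \implies \PP$ thin'', suppose that $\PP$ contains a square tetromino $T$ as a subpolyomino. Remark \ref{rem:nonthin} shows that $T$ alone produces a rook configuration, namely an anti-diagonal pair inside $T$, whose ``cost'' in the algebra is strictly smaller than in the rook count. The goal is to promote this local defect to a strict inequality of some coefficient of $r_\PP(t)-h(t)$. Inside the simplicial-complex framework sketched above, the anti-diagonal pair of $T$ is counted by $r_\PP(t)$ but should not be a face of $\Delta$; this would force a strict inequality at the corresponding coefficient. A parallel strategy would be an inductive collapsing argument: collapse away cells not belonging to $T$, track both $h(t)$ and $r_\PP(t)$ as in Theorem \ref{thm:rookhilb}, and reduce to the already-verified base case of $T$ itself.

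The principal obstacle, in both directions, is the absence of any general Cohen--Macaulayness or primality statement for $K[\PP]$ when $\PP$ is not simple. Establishing a squarefree Gr\"obner basis of $I_\PP$ and shellability of the associated simplicial complex for every thin polyomino is the decisive intermediate step: with it in hand, the forward direction reduces to a clean combinatorial bijection and the backward direction to localizing the square-tetromino defect, while without it the proof strategies on both sides remain stuck at the primality and regular-element arguments used in Corollary \ref{cor:HSP}.
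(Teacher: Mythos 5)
This statement is a \emph{Conjecture} in the paper, not a theorem; the authors offer no proof and in fact explicitly warn at the end of Section \ref{sec:Gorenstein} that for multiply-connected thin polyominoes ``there are some non-prime ones, so that we can not directly retrieve the Cohen-Macaulayness of $K[\PP]$.'' So there is no ``paper proof'' to compare against, and your text is itself a strategy sketch rather than a proof; you candidly flag the decisive missing ingredient yourself. I will therefore assess the plan on its own terms.

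There is one concrete factual error in the forward direction: you assert that Proposition \ref{prop:coll} (collapsibility) ``uses only thinness and never the absence of holes,'' but its proof relies on Lemma \ref{lem:exist}, whose argument begins ``Since $\PP$ is simple and thin \dots\ for any two cells $C$ and $D$ of $\PP$ there is a unique path of cells connecting $C$ and $D$.'' Uniqueness of paths fails for multiply-connected polyominoes. A clean counterexample is the thin annulus obtained from a $3\times 3$ block by removing the center cell: it contains no square tetromino, it is multiply connected, and each of its four maximal inner intervals meets exactly two others in a cell. Hence there is no tail and no endcut, and the polyomino is not collapsible in any maximal inner interval. So the combinatorial half of your plan already breaks before the algebra is even touched, and the collapsing induction of Theorem \ref{thm:rookhilb} cannot be run as stated on general thin polyominoes.

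Beyond that, the plan pivots on the unproven intermediate claim that $I_\PP$ admits a squarefree initial ideal with shellable (hence Cohen--Macaulay) Stanley--Reisner complex for \emph{every} thin polyomino, and that $r_\PP(t)$ is the $h$-polynomial of that complex. This is not a small technical lemma: since $I_\PP$ can fail to be prime for non-simple thin $\PP$, essentially all the structural inputs used in Section \ref{sec:hilb} (regularity of $x_u$, the colon computation in Lemma \ref{lem:colon}, the identifications in Lemma \ref{lem:coronavirus}, and the exact-sequence bookkeeping in Corollary \ref{cor:HSP}) would need to be rebuilt from scratch or replaced. Similarly, in the converse direction you rely on the same unestablished simplicial model to ``localize the square-tetromino defect''; without it there is no mechanism in place to convert the local anti-diagonal defect of Remark \ref{rem:nonthin} into a strict inequality of a specific coefficient of $h(t)$, especially since one does not even know a priori that $h(t)$ has nonnegative coefficients when $K[\PP]$ may fail to be Cohen--Macaulay. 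In short, this remains an open conjecture, your sketch identifies sensible questions to investigate, but both the combinatorial collapsing step and the algebraic backbone are genuinely missing, and the former is missing for a reason you did not account for.
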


Moreover, due to Theorem \ref{thm:rookhilb} and \cite[Theorem 2.3]{EHQR}, we ask the following
\begin{Question}
Let $\PP$ be a polyomino. Then $\reg K[\PP]=r(\PP)$?
\end{Question}

\end{document}